\documentclass[final,leqno]{siamltex}

\usepackage{amsmath,amssymb,amscd,amsxtra,amsfonts,mathrsfs}
\usepackage{epsf,graphicx,epsfig,color,latexsym,cite,cases}

\title{Analysis of Transient Acoustic-Elastic Interaction in an Unbounded
Structure}

\author{Yixian Gao\thanks{School of Mathematics and Statistics, Center for
Mathematics and Interdisciplinary Sciences, Northeast Normal University,
Changchun, Jilin 130024, China. This author's research was supported in part by
NSFC grant 11571065 and Jilin Science and Technology Development Project. ({\tt
gaoyx643@nenu.edu.cn})} \and Peijun Li\thanks{Department of Mathematics, Purdue
University, West Lafayette, Indiana 47907, USA. This author's research was
supported in part by the NSF grant DMS-1151308.
({\tt lipeijun@math.purdue.edu})} \and Bo Zhang\thanks{LSEC and Institute of
Applied Mathematics, AMSS, Chinese Academy of Sciences, Beijing 100190, China.
This author's research was partially supported by the
NNSFC grants 61379093 and 91430102. ({\tt b.zhang@amt.ac.cn})}}

\begin{document}

\maketitle

\begin{abstract}
Consider the wave propagation in a two-layered medium consisting of a homogeneous
compressible air or fluid on top of a homogeneous isotropic elastic solid. The
interface between the two layers is assumed to be an unbounded rough surface.
This paper concerns the time-domain analysis of such an acoustic-elastic
interaction problem in an unbounded structure in three dimensions.
Using an exact transparent boundary condition and suitable interface conditions,
we study an initial-boundary value problem for the coupling of the Helmholtz
equation and the Navier equation. The well-posedness and stability are
established for the reduced problem. Our proof is based on the method of energy,
the Lax--Milgram lemma, and the inversion theorem of the Laplace transform.
Moreover, a priori estimates with explicit dependence on the time are achieved
for the quantities of acoustic pressure and elastic displacement by taking
special test functions for the time-domain variational problem.
\end{abstract}

\begin{keywords}
Acoustic wave equation, elastic wave equation, unbounded rough surface,
time domain, stability, priori estimates
\end{keywords}

\begin{AMS}
78A46, 65C30
\end{AMS}

\pagestyle{myheadings}
\thispagestyle{plain}
\markboth{Y. Gao, P. Li, and B. Zhang}{Transient Acoustic-Elastic Interaction}

\section{Introduction}

Consider a two-layered medium which consists of a homogeneous compressible air
or fluid on top of a homogeneous isotropic elastic solid. The interface between
air/fluid and solid is assumed to be an unbounded rough surface. An unbounded
rough surface refers to a non-local perturbation of an infinite plane surface
such that the whole surface lies within a finite distance of the original plane.
As a source located in the solid, the external force generates an elastic
wave, which propagates towards the interface and further excites an acoustic
wave in the air/fluid. This process leads to an air/fluid-solid interaction
problem with an unbounded interface separating the acoustic and elastic waves
which are coupled on the interface through two continuity conditions. The
first kinematic interface condition is imposed to ensure that the normal
velocity of the air/fluid on one side of the boundary matches the accelerated
velocity of the solid on another side. The second one is the dynamic condition
which results from the balance of forces on two sides of the interface. The
model problem describes the seismic wave propagation in the air/fluid-solid
medium due to the excitation of an earthquake source which is located in the
crust between the lithosphere and the mantle of the Earth. The goal of this
paper is to carry the mathematical analysis of the time-domain acoustic-elastic
scattering problem in such an unbounded structure in three dimensions.

This problem falls into the class of unbounded rough surface scattering
problems, which have been of great interest to physicists, engineers, and
applied mathematicians for many years due to their wide range of applications in
optics, acoustics, radio-wave propagation, seismology, and radar techniques
\cite{Abubakar1962, Elfouhaily2004, Ogilvy191, Voronovich1998,
Warnick2001}. The elastic wave scattering by unbounded interfaces has many
important applications in geophysics and seismology. For instance, the problem
of elastic pulse transmission and reflection through the Earth is fundamental to
the investigation of earthquakes and the utility of controlled explosions in
search for oil and ore bodies \cite{fokkema1980reflection,
fokkema1977elastodynamic, sherwood1958elastic}. The unbounded rough
surface scattering problems are quite challenging due to the unbounded surfaces.
The usual Sommerfeld (for acoustic waves) or Silver--M\"{u}ller (for
electromagnetic waves) radiation condition is not valid any more
\cite{Arens2005, Zhang1998}. The Fredholm alternative theorem is not applicable
either due to the lack of compactness result. For the time-harmonic problems, we
refer to \cite{Chandler2006, Chandler2005, Chandler1999, Lechleiter2010,
LiShen2012} for some mathematical studies on the two-dimensional Helmholtz
equation and \cite{Haddar2011, LiWuZheng2011, LiZhengZheng2016} for the
three-dimensional Maxwell equations. The time-domain scattering problems have
recently attracted considerable attention due to their capability
of capturing wide-band signals and modeling more general material and
nonlinearity \cite{ChenMonk2014, jin2009finite, LiHuang2013, Riley2008,
Wang2012}, which motivates us to tune our focus from seeking the best possible
conditions for those physical parameters to the time-domain problem. Comparing
with the time-harmonic problems, the time-domain problems are less
studied due to the additional challenge of the temporal dependence.
The analysis can be found in \cite{Chen2008Maxwell, wang2014} for
the time-domain acoustic and electromagnetic obstacle scattering problems.
We refer to \cite{LiWangWood2015} and \cite{GaoLi2016} for the analysis of the
time-dependent electromagnetic scattering from an open cavity and a periodic
structure, respectively.

The acoustic-elastic interaction problems have received much attention in both
the mathematical and engineering communities
\cite{dallas1989, donea1982arbitrary, hamdi1986mixed, Hsiao1989, Hsiao1994,
LukeMartin1995}. There are also some numerical studies on the inverse problems
arising from the fluid-solid interaction such as reconstruction of surfaces of
periodic structures or obstacles \cite{HuKrish2016, TaoHu2016}. Many
approaches have been attempted to solve numerically the time-domain problems
such as coupling of boundary element and finite element with different time
quadratures \cite{estorff1991, soares2006dynamic, Flemisch2006}. However, the
rigorous mathematical study is still open at present.

In this work, we intend to answer the mathematical questions on well-posedness
and stability of the time-domain acoustic-elastic interaction problem in an
unbounded structure. The problem is reformulated as an initial-boundary value
problem by adopting an exact transparent boundary condition (TBC). Using the
Laplace transform and energy method, we show that the reduced variational
problem has a unique weak solution in the frequency domain. Meanwhile, we obtain
the stability estimate to show the existence of the solution in the
time-domain. In addition, we achieve a priori estimates with explicit
dependence on the time for the pressure of the acoustic wave and
the displacement of the elastic wave by considering directly the time-domain
variational problem and taking special test functions.

The paper is organized as follows. In section \ref{PF}, we introduce the model
equations and interface conditions for the acoustic-elastic interaction problem.
The time-domain TBC is presented and some trace results are proved. Section
\ref{srp} is devoted to the analysis of the reduced problem, where the
well-posdeness and stability are addressed in both the frequency and time
domains. We conclude the paper with some remarks  in section \ref{cl}.

\section{Problem formulation}\label{PF}

In this section, we define some notation, introduce the model equations, and
present an initial-boundary value problem for the acoustic-elastic
scattering in an air/fluid-solid medium.

\subsection {Problem Geometry}

\begin{figure}
\centering
\includegraphics[width=0.5\textwidth]{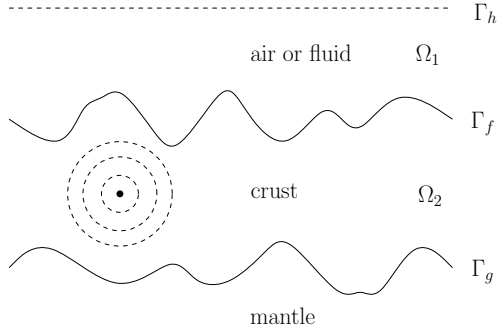}
\caption{Problem geometry of the acoustic-elastic interaction in an unbounded
structure.}
\label{pg}
\end{figure}

As shown in Figure \ref{pg}, we consider an active source which is embedded in
an elastic solid medium. It models an earthquake focus located in the crust
which lies between the lithosphere and the rigid mantle of the Earth. Due to the
excitation of the source, an elastic wave is generated in the solid and
propagates through to the medium of the air/fluid. Clearly, this process leads
to the air/fluid-solid interaction problem with the scattering interface
separating the domains where the acoustic and elastic waves travel.

Let $\boldsymbol{r}=(x, y)^\top\in\mathbb{R}^2$ and
$\boldsymbol{x}=(x, y, z)^\top\in\mathbb{R}^3$. Denote by $\Gamma_f=
\{\boldsymbol x\in \mathbb R^3: z= f(\boldsymbol r)\}$ the surface separating
the air/fluid and the solid, where $f$ is assumed to be a $W^{1, \infty}(\mathbb
R^2)$ function. Let $\Gamma_g =\{\boldsymbol x\in \mathbb R^3: z = g(\boldsymbol
r)\}$ be the surface separating the crust and the mantle, where $g$ is a
$L^\infty(\mathbb R^2)$ function satisfying $g (\boldsymbol r) <
f(\boldsymbol r), \boldsymbol{r}\in\mathbb R^2$. We assume that the open space
$\Omega^{+}_f =\{ \boldsymbol x \in \mathbb R^3: z >  f(\boldsymbol r)\}$ is
filled with a homogeneous compressible air or a compressible inviscid fluid with
the constant density $\rho_1$. The space $ \Omega_2 =\left \{ \boldsymbol x \in
\mathbb R^3:  g (\boldsymbol r) \leq  z \leq f (\boldsymbol r) \right \}$
is assumed to be occupied by a homogeneous isotropic linear elastic solid
which is characterized by the constant mass density $\rho_2$ and Lam\'{e}
parameters $\mu, \lambda$. Define an artificial planar surface
$\Gamma_h=\{\boldsymbol x\in\mathbb{R}^3, z=h\}$, where
$h>\sup_{\boldsymbol r\in\mathbb{R}^2}f(\boldsymbol r)$ is a constant.
Let $\Omega_1= \left\{ \boldsymbol x \in \mathbb R^3: f (\boldsymbol r)< z <h
\right\}$ and $\Omega=\Omega_1 \cup \Omega_2.$

\subsection{Acoustic wave  equation}

The acoustic wave field in air/fluid is governed by the conservation and the
dynamics equations in the  time-domain:
\begin{align}\label{cde}
\nabla p (\boldsymbol x, t)= -\rho_1 \partial_t \boldsymbol v  (\boldsymbol
x, t),\quad  c^2 \rho_1 \nabla \cdot \boldsymbol v (\boldsymbol x, t) =
-\partial_t p (\boldsymbol x, t), \quad \boldsymbol x \in \Omega^{+}_f,\, t>0,
\end{align}
where $p$ is the pressure, $\boldsymbol v$ is the velocity, and the
constants $\rho_1 >0 $ and $c>0$ are the density  and sound speed,
respectively. Eliminating the velocity  $\boldsymbol v$ from
\eqref{cde}, we obtain the acoustic wave equation for the pressure $p$:
\begin{align*}
 \Delta p (\boldsymbol x, t)- \frac{1}{c^2} \partial_t^2 p (\boldsymbol x, t)=0,
\quad  \boldsymbol x \in \Omega^{+}_f,\, t>0.
\end{align*}
The equation is constrained by the homogeneous initial conditions:
\[
p|_{t=0}=0, \quad \partial_t  p|_{t=0}=0, \quad \boldsymbol x\in\Omega^{+}_f.
\]
It follows from the conservation equation in \eqref{cde} that $\nabla \times
\boldsymbol v (\boldsymbol x, t)=0,$ i.e., the acoustic air/fluid
is irrotational. Thus there exists a scalar potential function $\varphi$ such
that $\boldsymbol v (\boldsymbol x, t) = \nabla \varphi (\boldsymbol x).$ It is
easy to note from \eqref{cde} that the corresponding dynamic component of
the pressure is given by
 \[
  p (\boldsymbol x, t) =- \rho_1 \partial_t \varphi (\boldsymbol x, t).
\]

\subsection{Elastic wave equation}

For the solid, the elastic wave field  in a homogeneous isotropic solid material
satisfies the linear time-domain  elasticity equation:
\begin{align}\label{ew}
 \nabla \cdot \boldsymbol  \sigma (\boldsymbol u (\boldsymbol x, t))
 -\rho_2 \partial_t^2 \boldsymbol u (\boldsymbol x, t)= \boldsymbol j
(\boldsymbol x, t), \quad \boldsymbol x \in \Omega_2,\,t>0,
\end{align}
where $\boldsymbol u =(u_1, u_2, u_3)^\top$ is the displacement vector,
$\rho_2>0$ is the density of the elastic solid material, $\boldsymbol j$ is the
source which models the earthquake focus and is assumed to have a compact
support contained in $\Omega_2$, and the symmetric stress tensor $\boldsymbol
\sigma (\boldsymbol u)$ is given by the
generalized Hook law:
\begin{equation}\label{sig}
 \boldsymbol \sigma (\boldsymbol u) = 2 \mu {\mathcal E} (\boldsymbol u)+\lambda
 {\rm tr} \left({\mathcal E} (\boldsymbol u) \right) {\rm  I},
 \quad {\mathcal E} (\boldsymbol u) =\frac{1}{2} \big(\nabla \boldsymbol u
+(\nabla \boldsymbol u)^{\top} \big).
\end{equation}
Here $\mu, \lambda$ are the Lam\'{e} parameters satisfying $\mu>0, \lambda +\mu
>0$, $\rm I \in \mathbb R^{3\times 3}$ is the identity matrix,
${\rm tr}(\mathcal{E}(\boldsymbol u))$ is the trace of the matrix
$\mathcal{E}(\boldsymbol u)$, and $\nabla \boldsymbol u$ is the displacement
gradient tensor given by
\begin{align*}
 \nabla \boldsymbol u=
 \left[
 \begin{matrix}
  \partial_x u_1 & \partial_y u_1 & \partial_z u_1 \\
  \partial_x u_2 & \partial_y u_2 & \partial_z u_2 \\
  \partial_x u_3& \partial_y u_3 & \partial_z u_3
 \end{matrix}
\right].
\end{align*}
Substituting \eqref{sig} into \eqref{ew}, we obtain the time-domain Navier
equation for the displacement $\boldsymbol u$:
 \begin{equation}\label{ne}
  \mu \Delta \boldsymbol u (\boldsymbol x, t)+ (\lambda + \mu ) \nabla \nabla
\cdot \boldsymbol u (\boldsymbol x, t) -\rho_2 \partial_t^2 \boldsymbol u
(\boldsymbol x, t) = \boldsymbol  j (\boldsymbol x,t),
  \quad \boldsymbol x \in \Omega_2,\, t>0.
 \end{equation}
By assuming that the mantle is rigid, we have
\[
 \boldsymbol u=0 \quad \text{on}~\Gamma_g,\, t>0.
\]
The elastic wave equation \eqref{ne} is constrained by the homogeneous initial
conditions:
\begin{align*}
 \boldsymbol u|_{t=0}=0, \quad \partial_t  \boldsymbol u|_{t=0}=0,
\quad  \boldsymbol x \in \Omega_2.
\end{align*}

\subsection{Interface conditions}

To couple the acoustic wave equation in the air/fluid and the elastic wave
equation in the solid, the kinematic interface condition is imposed to ensure
the continuity of the normal component of the
velocity on $\Gamma_f$:
\begin{equation}\label{kic}
 \boldsymbol n \cdot \boldsymbol v (\boldsymbol x, t) =\boldsymbol n \cdot
\partial_t  \boldsymbol u  (\boldsymbol x, t), \quad \boldsymbol x \in
\Gamma_f,\,t>0,
\end{equation}
where $\boldsymbol n$ is the unit normal on $\Gamma_f$
pointing from $\Omega_2$ to  $\Omega_1$. Noting $\boldsymbol v (\boldsymbol x,
t) =\nabla \varphi (\boldsymbol x, t)$ and $p (\boldsymbol x, t)=-\rho_1
\partial_t \varphi (\boldsymbol x, t)$, we have from \eqref{kic} that
\[
 \partial_{\boldsymbol n} p= \boldsymbol n \cdot \nabla p
=-\rho_1 \boldsymbol n \cdot \partial_t^2 \boldsymbol u
 \quad \text {on}~\Gamma_f,\, t>0.
\]
In addition, the dynamic interface condition is required to ensure the
continuity of traction:
\[
- p \boldsymbol n = \boldsymbol \sigma (\boldsymbol u)\cdot \boldsymbol  n,
\quad\boldsymbol x \in \Gamma_f,\, t>0,
\]
where $\boldsymbol \sigma (\boldsymbol u) \cdot \boldsymbol n$ is denoted as
the multiplication of the stress tensor $\boldsymbol \sigma (\boldsymbol u)$
with the normal vector $\boldsymbol n.$

\subsection{Laplace transform and some functional spaces}

We first  introduce some properties of the Laplace transform. For any
$s=s_1+{\rm i} s_2$ with $s_1>0, s_2\in\mathbb{R}$, define $\breve
{\boldsymbol u}(s)$ to be the Laplace transform of the function $\boldsymbol
u(t)$, i.e.,
\[
\breve {\boldsymbol u}(s) =\mathscr L (\boldsymbol u) (s)= \int_0^{\infty} e^{-s
t} \boldsymbol u (t) {\rm d} t.
\]
It follows from the integration by parts that
\begin{align*}
\int_0^t \boldsymbol u (\tau) {\rm d} \tau =\mathscr L^{-1} (s^{-1} \breve
{\boldsymbol u} (s)),
\end{align*}
where $\mathscr L^{-1}$ is the inverse Laplace transform. It is easy to verify
from the inverse Laplace transform that
\begin{equation*}
\boldsymbol u(t)=\mathscr F^{-1} \big( e^{s_1 t} \mathscr L (\boldsymbol u)
(s_1+s_2)\big),
\end{equation*}
where $\mathscr F ^{-1}$ denotes the inverse Fourier transform with respect to
$s_2.$

Recall the Plancherel or Parseval identity for the Laplace transform (cf.
\cite[(2.46)]{Cohen2007}):
 \begin{equation}\label{PI}
 \frac{1}{2 \pi} \int_{- \infty}^{\infty} \breve {\boldsymbol  u} (s)\cdot
\breve{\boldsymbol   v} (s) {\rm d} s_2= \int_0^{\infty} e^{- 2 s_1 t}
  {\boldsymbol u} (t)\cdot{\boldsymbol v} (t) {\rm d} t,\quad\forall ~
s_1>\lambda,
 \end{equation}
where $\breve  {\boldsymbol u}= \mathscr L (\boldsymbol u), \breve {\boldsymbol
v}= \mathscr L (\boldsymbol v)$ and $\lambda$ is the abscissa of convergence for
the Laplace transform of $\boldsymbol u$ and $\boldsymbol v$.

The following lemma (cf. \cite[Theorem 43.1]{Treves1975}) is an analogue of
Paley--Wiener--Schwarz theorem for the Fourier transform of the distributions
with compact supports in the case of the Laplace transform.

\begin{lemma}\label {A2}
Let $\breve {\boldsymbol h} (s)$ be a holomorphic function in the half-plane
$s_1 > \sigma_0$ and be valued in the Banach space $\mathbb E$. The following
two conditions are equivalent:
\begin{enumerate}

\item there is a distribution $\boldsymbol h \in \mathcal D_{+}'(\mathbb E)$
whose Laplace transform is equal to $\breve{\boldsymbol h}(s)$;

\item there is a real $\sigma_1$ with $\sigma_0 \leq \sigma_1 <\infty$ and an
integer $m \geq 0$ such that for all complex numbers $s$ with ${\rm Re} s =s_1 >
\sigma_1,$ we have $\| \breve {\boldsymbol  h} (s)\|_{\mathbb E} \lesssim
(1+|s|)^{m}$,

\end{enumerate}
where $\mathcal D'_{+}(\mathbb E)$ is the space of distributions on the real
line which vanish identically in the open negative half line.
\end{lemma}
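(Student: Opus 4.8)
The plan is to prove the two implications separately, treating the equivalence as the vector-valued, one-sided (half-plane) analogue of the Paley--Wiener--Schwarz theorem. The implication from (1) to (2) is the routine direction and rests on the structure theorem for tempered distributions, while the implication from (2) to (1) is where the real work lies: one must reconstruct a distribution supported in $[0,\infty)$ from growth information alone, which I would do through a regularized inverse Laplace transform.

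For the direction (1) $\Rightarrow$ (2), I would fix $\sigma>\sigma_0$ and observe that, by the very convergence of the Laplace integral, $e^{-\sigma t}\boldsymbol h(t)$ is a tempered $\mathbb E$-valued distribution supported in $[0,\infty)$. Invoking the structure theorem, I write it as a finite-order derivative $\partial_t^N \boldsymbol G_\sigma$ of a continuous function $\boldsymbol G_\sigma$ of at most polynomial growth. Taking the Fourier transform in $s_2$ then gives $\breve{\boldsymbol h}(\sigma+{\rm i}s_2)=(\sigma+{\rm i}s_2)^N\,\widehat{\boldsymbol G}_\sigma(s_2)$ with $\|\widehat{\boldsymbol G}_\sigma(s_2)\|_{\mathbb E}$ polynomially bounded, whence $\|\breve{\boldsymbol h}(s)\|_{\mathbb E}\lesssim(1+|s|)^m$. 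A short argument using the uniformity of the structure-theorem bounds as $\sigma$ ranges over a compact interval, together with holomorphy, lets me pass from a single vertical line to the full half-plane $s_1>\sigma_1$ for any fixed $\sigma_1>\sigma_0$.

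For the direction (2) $\Rightarrow$ (1), I would choose an integer $k>m+1$ and define, for fixed $\sigma>\sigma_1$,
\[
\boldsymbol g(t)=\frac{1}{2\pi{\rm i}}\int_{\sigma-{\rm i}\infty}^{\sigma+{\rm i}\infty} e^{st}\,\frac{\breve{\boldsymbol h}(s)}{s^{k}}\,{\rm d}s,
\]
a Bochner integral that converges absolutely in $\mathbb E$ thanks to the gain of $(1+|s|)^{-(k-m)}$, and thus produces a continuous $\mathbb E$-valued function. I would then establish three facts. First, $\boldsymbol g$ is independent of the contour $\sigma$: this follows from Cauchy's theorem applied to the holomorphic integrand on vertical strips, the horizontal segments being killed by the decay of $s^{-k}\breve{\boldsymbol h}(s)$. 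Second, $\boldsymbol g(t)=0$ for $t<0$: letting $\sigma\to+\infty$ and using $|e^{st}|=e^{\sigma t}\to 0$ for $t<0$ together with the absolute convergence forces the integral to vanish. Third, setting $\boldsymbol h=\partial_t^{k}\boldsymbol g$ in the sense of distributions yields an element of $\mathcal D_+'(\mathbb E)$, since $\boldsymbol g$ is continuous and supported in $[0,\infty)$. Finally I would verify $\mathscr L(\boldsymbol h)(s)=s^{k}\mathscr L(\boldsymbol g)(s)=\breve{\boldsymbol h}(s)$, where the first equality is the differentiation rule for the Laplace transform (boundary terms vanishing because $\boldsymbol g$ is supported in $[0,\infty)$) and the second is Fourier inversion applied to the defining integral.

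The hard part will be the (2) $\Rightarrow$ (1) construction, and specifically the contour-independence and support arguments carried out for a Banach-space-valued, merely holomorphic integrand. Justifying the interchange of the distributional derivative with the Laplace transform, and confirming that the reconstructed $\boldsymbol h$ has abscissa of convergence no larger than $\sigma_0$ so that its transform genuinely agrees with $\breve{\boldsymbol h}$ on the whole half-plane, also require care; these are exactly the points where the one-sided half-plane geometry (as opposed to the two-sided, entire setting of the classical Paley--Wiener theorem) must be used. Since the statement is quoted from \cite{Treves1975}, I would ultimately cross-check the regularization power $k$ and the order $m$ against that reference.
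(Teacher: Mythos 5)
The paper does not prove this lemma at all: it is quoted verbatim from \cite[Theorem 43.1]{Treves1975} and used as a black box, so there is no in-paper argument to compare yours against. Your outline is, in substance, the standard proof of this vector-valued Paley--Wiener--Schwartz statement (and essentially the one in Tr\`eves): structure theorem plus a damped Fourier/Laplace estimate for $(1)\Rightarrow(2)$, and a regularized inverse Laplace integral $\boldsymbol g=\mathscr L^{-1}(s^{-k}\breve{\boldsymbol h})$ with $k>m+1$, contour shifting to get support in $[0,\infty)$, and $\boldsymbol h=\partial_t^k\boldsymbol g$ for $(2)\Rightarrow(1)$. Two points in your sketch need repair. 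First, in $(1)\Rightarrow(2)$ you cannot claim that $\|\widehat{\boldsymbol G}_\sigma(s_2)\|_{\mathbb E}$ is polynomially bounded pointwise: the Fourier transform of a continuous function of polynomial growth is in general only a tempered distribution. The correct move is to damp with $e^{-\sigma t}$ for $\sigma$ strictly below the abscissa $s_1$ at which you evaluate, so that $\breve{\boldsymbol h}(s)=(s-\sigma)^N\int_0^\infty \boldsymbol G_\sigma(t)e^{-(s-\sigma)t}\,{\rm d}t$ converges absolutely and the integral is uniformly bounded for $s_1\ge\sigma_1>\sigma$; this yields the bound $(1+|s|)^N$ directly without Fourier-transforming $\boldsymbol G_\sigma$ on a critical line. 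Second, in $(2)\Rightarrow(1)$ the divisor $s^{k}$ vanishes at $s=0$, which lies in the half-plane $s_1>\sigma_1$ whenever $\sigma_1<0$; you should either replace $s^{-k}$ by $(s-a)^{-k}$ with real $a\le\sigma_1$ (and correspondingly set $\boldsymbol h=(\partial_t-a)^k\boldsymbol g$), or first normalize to $\sigma_1>0$ by the exponential shift $\boldsymbol h\mapsto e^{-ct}\boldsymbol h$. With these adjustments your plan is sound; the remaining steps you flag (contour independence, the $\sigma\to+\infty$ support argument, and $\mathscr L(\partial_t^k\boldsymbol g)=s^k\mathscr L(\boldsymbol g)$ without boundary terms for distributions in $\mathcal D_+'(\mathbb E)$) go through exactly as you describe.
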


Next we introduce some Sobolev spaces. For any $u(\cdot, h) \in L^2(\Gamma_h)$
which is identified as $L^2(\mathbb{R}^2)$, we denote by $\hat{u}(\boldsymbol\xi, h)$ 
the Fourier transform of $u (\boldsymbol r, h)$:
\[
\hat u (\boldsymbol \xi, h)=\frac{1}{2 \pi}\int_{\mathbb R^2} u (\boldsymbol r,
h) e^{-{\rm i} \boldsymbol r \cdot \boldsymbol  \xi} {\rm d}
\boldsymbol r,
\]
where $\boldsymbol \xi = (\xi_1, \xi_2)^\top \in \mathbb R^2.$ For any
$\alpha\in\mathbb{R}$, define the functional space
 \[
 H^\alpha(\Gamma_h)=\left\{ u (\boldsymbol r, h) \in L^2
(\mathbb R^2):  \int_{\mathbb R^2} (1+ |\boldsymbol \xi|^2)^{\alpha} |\hat
u(\boldsymbol\xi, h)|^2{\rm d}\boldsymbol  \xi < \infty \right\},
 \]
which is a Sobolev space under the norm
\[
\|u\|_{H^{\alpha} (\Gamma_h)} =\left[ \int_{\mathbb R^2} (1+ |\boldsymbol
\xi|^2)^{\alpha} |\hat u(\boldsymbol\xi, h)|^2 {\rm d} \boldsymbol  \xi
\right]^{1/2}.
\]
It is clear to note that the dual space associated with $H^{\alpha} (\Gamma_h)$
is the space $H^{- \alpha} (\Gamma_h)$ with respect to the scalar
product in $L^2 (\mathbb R^2)$ defined by
\[
\langle u,  v \rangle_{\Gamma_h} =\int_{\Gamma_h}u(\boldsymbol r,
h)\bar{v}(\boldsymbol r, h){\rm d}\boldsymbol{r}=\int_{\mathbb R^2} \hat
u(\boldsymbol\xi, h) \bar {\hat v}(\boldsymbol\xi, h) {\rm d }\boldsymbol\xi.
\]
Denote by $H^{1/2}(\Gamma_f)$ the Sobolev trace space, the subspace of
$L^2(\Gamma_f)$ such that
\[
 \int_{\Gamma_f}|u(\boldsymbol r, f(\boldsymbol r))|^2{\rm d}\boldsymbol r
 +\int_{\Gamma_f}\int_{\Gamma_f}\frac{|u(\boldsymbol r_1,
f(\boldsymbol r_1))-u(\boldsymbol r_2, f(\boldsymbol r_2))|^2}{|\boldsymbol r_1
-\boldsymbol r_2|^3}{\rm d}\boldsymbol r_1{\rm d}\boldsymbol r_2<\infty.
\]
$H^{1/2}(\Gamma_f)$ is equipped with the norm
\[
 \|u\|_{H^{1/2}(\Gamma_f)}=\left( \int_{\Gamma_f}|u(\boldsymbol r, f(\boldsymbol
r))|^2{\rm d}\boldsymbol r  +\int_{\Gamma_f}\int_{\Gamma_f}\frac{|u(\boldsymbol
r_1, f(\boldsymbol r_1))-u(\boldsymbol r_2, f(\boldsymbol r_2))|^2}{|\boldsymbol
r_1 -\boldsymbol r_2|^3}{\rm d}\boldsymbol r_1{\rm d}\boldsymbol r_2
\right)^{1/2}.
\]

Denote by $H^{\nu} (\Omega)= \{D^{\alpha} u \in L^2 (\Omega)~\text {for all}~
|\alpha| \leq \nu\}$ the standard Sobolev space of square integrable
functions with the order of derivatives up to $\nu$. Let $H^1_{\Gamma_g}
(\Omega)=\{ u\in H^1 (\Omega): u =0 ~\text {on}~\Gamma_g\}$. Let $H^1_{\Gamma_g}
(\Omega)^3$ and $H^{1/2} (\Gamma_f)^3$ be the Cartesian product spaces
equipped with the corresponding 2-norms of $H^1_{\Gamma_g} (\Omega)$ and
$H^{1/2} (\Gamma_f)$, respectively. For any $\boldsymbol u=(u_1, u_2,
u_3)^\top \in H^1_{\Gamma_g} (\Omega_2)^3$, define the Frobenius norm:
\[
\|\nabla {\boldsymbol u}\|_{F (\Omega_2)}= \left ( \sum \limits_{j=1}^3
\int_{\Omega_2}  |\nabla u_j |^2  {\rm d}  \boldsymbol  x  \right)^{1/2}.
\]
It is easy to verify that
\begin{align}\label{f1}
 \|\nabla \boldsymbol u\|^2_{F (\Omega_2)} +\|\nabla \cdot \boldsymbol
u\|^2_{L^2 (\Omega_2)} \lesssim \|\boldsymbol u\|^2_{H^1 (\Omega_2)^3}.
\end{align}
Hereafter, the expression $a \lesssim  b$ or $a\gtrsim c$ stands for $a \leq  C
b$ or $a\geq C b$, where $C$ is a positive constant and its specific value is
not required but should be always clear from the context.

\subsection{Transparent boundary condition}

In this subsection, we will introduce an exact time-domain TBC to formulate the
acoustic-elastic wave interaction problem into the following coupled
initial-boundary value problem:
\begin{equation}\label{rp}
 \begin{cases}
  \Delta p  - \frac{1}{c^2 } \partial_t^2  p =0 \quad  &\text{in}~
\Omega_1,\,t>0\\
  \mu \Delta \boldsymbol u +  (\lambda+\mu) \nabla \nabla \cdot \boldsymbol u
-\rho_2 \partial_t^2 \boldsymbol u =\boldsymbol j
  \quad  & \text {in}~  \Omega_2 ,\,t>0,\\
  p|_{t=0}=\partial_t p|_{t=0}=0,\quad \boldsymbol u|_{t=0}=\partial_{t}
\boldsymbol u|_{t=0}=0 \quad & \text {in}~\Omega,\\
\partial_{\boldsymbol n} p=-\rho_1 \boldsymbol n \cdot
\partial_t^2 \boldsymbol u, \quad -p \boldsymbol n = \boldsymbol \sigma
(\boldsymbol u) \cdot \boldsymbol  n\quad & \text {on}~\Gamma_f,\,t>0,\\
  \partial_{\boldsymbol \nu} p= \mathscr T p, \quad  &\text
{on}~~\Gamma_h,\,t>0,\\
  \boldsymbol u =0 \quad & \text {on}~ \Gamma_g,\,t>0,
 \end{cases}
 \end{equation}
where $ \boldsymbol  \nu=(0, 0, 1)^\top$ is the unit normal vector on $\Gamma_h$
pointing from $\Omega_1$ to $\Omega_h^+=\{\boldsymbol x\in\mathbb R^2: z>h\}$,
and $\mathscr T$ is the time-domain TBC operator on $\Gamma_h$. In
what follows, we shall derive the formulation of the operator $\mathscr T$ and
show some of its properties.

Let $\breve p (\boldsymbol x, s)  = \mathscr L (p)$ and $\breve {\boldsymbol u}
(\boldsymbol x, s)=\mathscr L (\boldsymbol u)$ be the Laplace transform of
$p(\boldsymbol x, t)$ and $\boldsymbol u (\boldsymbol x, t)$ with respect to
$t$, respectively. Recall that
\begin{align*}
 &\mathscr L (\partial_t p) = s \breve p (\cdot, s )- p (\cdot, 0),\quad
 \mathscr  L (\partial_t^2 p)= s^2 \breve {p} (\cdot, s)- s p (\cdot,
0)-\partial_t p (\cdot, 0),\\
& \mathscr L (\partial_t \boldsymbol u)= s \breve {\boldsymbol u} (\cdot, s)-
\boldsymbol u (\cdot, 0),
 \quad \mathscr L (\partial_t^2  \boldsymbol u) = s^2 \breve {\boldsymbol u }
(\cdot, s) - s \boldsymbol u (\cdot, 0)- \partial_t \boldsymbol u (\cdot,0).
\end{align*}
Taking the Laplace transform of \eqref{rp} and using the initial conditions, we
obtain the acoustic-elastic wave interaction problem in the $s$-domain:
\[
 \begin{cases}
  \Delta  \breve {p}  - \frac{s^2}{c^2 } \breve   p =0 \quad  &\text{in}~
\Omega_1,\\
  \mu \Delta \breve { \boldsymbol u } +  (\lambda+\mu) \nabla \nabla \cdot
\breve { \boldsymbol u} -\rho_2 s^2  \breve {\boldsymbol u} = \breve
{\boldsymbol j}\quad  & \text {in}~  \Omega_2,\\
  \partial_{\boldsymbol n}\breve p=-\rho_1 s^2 \boldsymbol n
\cdot \breve {\boldsymbol u}, \quad - \breve {p} \boldsymbol n = \boldsymbol
\sigma ( \breve {\boldsymbol u}) \cdot  \boldsymbol n \quad & \text
{on}~\Gamma_f,\\
  \partial_{\boldsymbol \nu} \breve p= \mathscr{B}  {\breve p}, \quad  &\text
{on}~\Gamma_h, \\
  \breve {\boldsymbol u} =0 \quad & \text {on}~ \Gamma_g,
 \end{cases}
 \]
where $\breve {\boldsymbol  j}= \mathscr L (\boldsymbol j)$, $\mathscr{B}$ is
the Dirichlet-to-Neumann (DtN) operator on $\Gamma_h$ in $s$-domain and
satisfies $\mathscr T= \mathscr L^{-1}\circ\mathscr B  \circ\mathscr L$.

In order to deduce the TBC, we consider the Helmholtz equation with a complex
wavenumber:
\begin{align}\label{pe}
 \Delta \breve {p} -\frac{s^2 }{c^2} \breve p=0 \quad \text {in}~\Omega_h^+.
\end{align}
Taking the Fourier transform of \eqref{pe} with respect to $\boldsymbol r$
yields
\begin{align}\label{so}
\begin{cases}
 \frac{{\rm d}^2 \hat {\breve p} (\boldsymbol \xi, z)}{ {\rm d} z^2}- \left(
\frac{s^2 }{c^2 } + |\boldsymbol \xi|^2\right) \hat {\breve p } (\boldsymbol\xi,
z)=0, \quad &  z>h,\\
 \hat {\breve p} ( \boldsymbol \xi, z) =\hat {\breve p} (\boldsymbol\xi, h),
\quad &z=h.
 \end{cases}
\end{align}
Solving \eqref{so} and using the bounded outgoing wave condition, we get
\[
 \hat {\breve p} (\boldsymbol\xi, z)= \hat{\breve p} (\boldsymbol\xi, h) e^{-
\beta (\boldsymbol\xi) (z-h)}, \quad z >h,
\]
where
\begin{align}\label{B1}
 \beta^2(\boldsymbol \xi) = \frac{s^2 }{c^2 } +|\boldsymbol \xi|^2 \quad\text
{with}~{\rm Re}\beta (\boldsymbol \xi) >0.
\end{align}
Thus we obtain the solution of \eqref{pe}:
\begin{align}\label{sol}
 \breve p (\boldsymbol r, z) = \int_{\mathbb R^2}
 \hat{\breve p} (\boldsymbol \xi,h) e^{- \beta ( \boldsymbol \xi) (z-h)} e^{
{\rm i}\boldsymbol \xi \cdot \boldsymbol r} {\rm d}  \boldsymbol\xi.
\end{align}
Taking the normal derivative of \eqref{sol} on $\Gamma_h$ and evaluating it at
$z=h$, we have
\begin{align*}
 \partial_{\boldsymbol \nu} \breve p (\boldsymbol r, h) =\int_{\mathbb R^2} -
\beta ( \boldsymbol \xi ) \hat  {\breve p}( \boldsymbol \xi, h) e^{{\rm i}
\boldsymbol \xi \cdot  \boldsymbol r} {\rm d } \boldsymbol\xi.
\end{align*}

For any function $u (\boldsymbol r, h)$ defined on $\Gamma_h$, we  defined
the DtN operator
\begin{align}\label{dtn}
 \left(\mathscr{B}  u \right) (\boldsymbol r, h)= \int_{\mathbb R^2} - \beta
(\boldsymbol \xi) \hat  u ( \boldsymbol \xi, h) e^{{\rm i} \boldsymbol \xi \cdot
\boldsymbol r } {\rm d} \boldsymbol \xi.
\end{align}

Let $z_1, z_2$ be two constants satisfying $z_2<z_1$. Define
$\Gamma_j=\{\boldsymbol x\in\mathbb R^2: z=z_j\}$ and $R=\{\boldsymbol
x\in\mathbb{R}^3: \boldsymbol r\in\mathbb R^2, z_2<z<z_1\}$. The following
several trace results are useful in subsequent analysis.

\begin{lemma}\label{tr}
Let $\gamma_0=(1+(z_1 -z_2)^{-1})^{1/2}$. We have the estimate
\[
\|u\|_{H^{1/2}(\Gamma_j)}\leq \gamma_0 \| u\|_{H^1(R)},\quad\forall u\in
H^1(R).
\]
\end{lemma}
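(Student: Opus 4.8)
The plan is to pass to the Fourier transform in the horizontal variable $\boldsymbol r$ and thereby reduce the estimate to a one-dimensional trace inequality for each fixed frequency $\boldsymbol\xi$. Writing $\kappa=\kappa(\boldsymbol\xi)=(1+|\boldsymbol\xi|^2)^{1/2}\geq 1$ and $w(z)=\hat u(\boldsymbol\xi, z)$, the Parseval identity applied slice by slice in $z$ gives
\[
\|u\|^2_{H^1(R)}=\int_{\mathbb R^2}\int_{z_2}^{z_1}\big(\kappa^2|w(z)|^2+|w'(z)|^2\big)\,{\rm d}z\,{\rm d}\boldsymbol\xi,
\]
while $\|u\|^2_{H^{1/2}(\Gamma_j)}=\int_{\mathbb R^2}\kappa\,|w(z_j)|^2\,{\rm d}\boldsymbol\xi$. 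It therefore suffices to establish the pointwise (in $\boldsymbol\xi$) bound
\[
\kappa\,|w(z_j)|^2\leq \gamma_0^2\int_{z_2}^{z_1}\big(\kappa^2|w|^2+|w'|^2\big)\,{\rm d}z
\]
and then integrate over $\boldsymbol\xi$.

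For the one-dimensional inequality I would start from the identity $\frac{{\rm d}}{{\rm d}z}(\kappa|w|^2)=2\kappa\,{\rm Re}(w'\bar w)$ and bound its right-hand side by $\kappa^2|w|^2+|w'|^2$ via Young's inequality. Taking $z_j=z_1$ and any base point $\zeta\in[z_2, z_1]$, the fundamental theorem of calculus gives $\kappa|w(z_1)|^2=\kappa|w(\zeta)|^2+\int_{\zeta}^{z_1}\frac{{\rm d}}{{\rm d}z}(\kappa|w|^2)\,{\rm d}z$, hence
\[
\kappa\,|w(z_1)|^2\leq \kappa\,|w(\zeta)|^2+\int_{z_2}^{z_1}\big(\kappa^2|w|^2+|w'|^2\big)\,{\rm d}z.
\]
Averaging this over $\zeta\in[z_2, z_1]$ and using $\kappa\leq\kappa^2$ to absorb the averaged term into the energy produces exactly the factor $1+(z_1-z_2)^{-1}=\gamma_0^2$. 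The case $z_j=z_2$ is identical with the orientation of the integration reversed.

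The argument is elementary; the only points that require care are a density step and the constant bookkeeping. I would first prove the estimate for $u\in C^\infty(\bar R)\cap H^1(R)$ so that the Fourier slices and the fundamental theorem of calculus are legitimate, and then extend to all of $H^1(R)$ by density. The one genuinely sharp feature is that I average over the base point $\zeta$ rather than evaluating at a single interior level; this is what yields the correct $(z_1-z_2)^{-1}$ scaling and lands precisely on $\gamma_0$ instead of a larger generic trace constant, so that step should not be shortcut.
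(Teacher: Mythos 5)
Your proposal is correct and follows essentially the same route as the paper: both reduce to a one-dimensional trace bound on the Fourier slices $\hat u(\boldsymbol\xi,\cdot)$, obtain it from the fundamental theorem of calculus averaged over the base point in $[z_2,z_1]$ together with a Young/Cauchy--Schwarz step weighted by $(1+|\boldsymbol\xi|^2)^{1/2}$, and use $(1+|\boldsymbol\xi|^2)^{1/2}\le 1+|\boldsymbol\xi|^2$ to land on the constant $\gamma_0^2=1+(z_1-z_2)^{-1}$. The only cosmetic difference is that you differentiate $\kappa|w|^2$ directly while the paper applies the weighted Cauchy--Schwarz inequality after integrating, which is the same computation.
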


\begin{proof}
First we have
\begin{align*}
(z_1 -z_2)|\zeta(z_j)|^2&=\int_{z_2}^{z_1}|\zeta(z)|^2
{\rm d}z+\int_{z_2}^{z_1}\int_z^{z_j}\frac{\rm d}{{\rm d}\tau}|\zeta(\tau)|^2
{\rm d}\tau{\rm d}z\\
&\leq\int_{z_2}^{z_1}|\zeta(z)|^2{\rm
d}z+(z_1 -z_2)\int_{z_2}^{z_1}2|\zeta(z)||\zeta'(z)|{\rm d}z,
\end{align*}
which implies by the Cauchy--Schwarz inequality that
\begin{equation}\label{cs1}
(1+|\boldsymbol\xi|^2)^{1/2}|\zeta(z_j)|^2\leq \gamma_0^2
(1+|\boldsymbol\xi|^2)\int_{z_2}^{z_1}|\zeta(z)|^2
{\rm d}z+\int_{z_2}^{z_1}|\zeta'(z)|^2 {\rm d}z.
\end{equation}

Given $u$ in $H^1(R)$, a simple calculation yields that
\begin{equation}\label{cs2}
\|u\|^2_{H^{1/2}(\Gamma_j)}=\int_{\mathbb{R}^2}(1+|\boldsymbol\xi|^2)^{1/2}
|\hat{u} (\boldsymbol\xi, z_j)|^2 {\rm d}\boldsymbol\xi
\end{equation}
and
\begin{equation}\label{cs3}
\|u\|^2_{H^1(R)}=\int_{z_2}^{z_1}\int_{\mathbb{R}^2}\left[
\left(1+|\boldsymbol\xi|^2\right)|\hat{u}(\boldsymbol\xi,
z)|^2+|\hat{u}'(\boldsymbol\xi, z)|^2\right] {\rm d}\boldsymbol\xi {\rm d}z,
\end{equation}
where $\hat{u}'(\boldsymbol\xi, z)=\partial_z \hat{u}(\boldsymbol\xi, z)$.

Using \eqref{cs1}, we obtain
\begin{align*}
(1+|\boldsymbol\xi|^2)^{1/2}|\hat{u}(\boldsymbol\xi, z_j)|^2 &\leq\gamma_0^2
(1+|\boldsymbol\xi|^2)\int_{z_2}^{z_1}|\hat{u}(\boldsymbol\xi, z)|^2
{\rm d}z+\int_{z_2}^{z_1}|\hat{u}'(\boldsymbol\xi, z)|^2 {\rm d}z\\
&\leq\gamma_0^2\int_{z_2}^{z_1}\left[(1+|\boldsymbol\xi|^2)|\hat{u}
(\boldsymbol\xi, z)|^2+|\hat{u}'(\boldsymbol\xi, z)|^2\right]{\rm d}z,
\end{align*}
which completes the proof after combining \eqref{cs2} and \eqref{cs3}.
\end{proof}

\begin{lemma}\label{trf}
 There exists a positive constant $C$ such that
 \[
  \|u\|_{H^{1/2}(\Gamma_f)}\leq C\|u\|_{H^1(\Omega_1)},\quad\forall u\in
H^1(\Omega_1).
 \]
\end{lemma}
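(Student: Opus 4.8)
The plan is to flatten the rough interface $\Gamma_f$ by a Lipschitz change of variables so that the estimate follows from the flat-interface trace estimate of Lemma \ref{tr}. The crucial observation is that, by its very definition, the norm $\|u\|_{H^{1/2}(\Gamma_f)}$ is exactly the Gagliardo $H^{1/2}$ norm of the pulled-back boundary function $v(\boldsymbol r):=u(\boldsymbol r,f(\boldsymbol r))$ on $\mathbb R^2$, since both the integration measure ${\rm d}\boldsymbol r$ and the distance $|\boldsymbol r_1-\boldsymbol r_2|$ appearing there live on the base plane. Hence it suffices to bound $\|v\|_{H^{1/2}(\mathbb R^2)}$ by $\|u\|_{H^1(\Omega_1)}$, and this is precisely what the flattening will reduce to Lemma \ref{tr}.

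First I would introduce the map $\Phi:\Omega_1\to\tilde\Omega_1$ defined by $\Phi(\boldsymbol r,z)=(\boldsymbol r,\,z-f(\boldsymbol r))$, which carries $\Gamma_f$ onto the plane $\{\tilde z=0\}$ and sends $\Omega_1$ onto $\tilde\Omega_1=\{(\boldsymbol r,\tilde z):0<\tilde z<h-f(\boldsymbol r)\}$. Since $f\in W^{1,\infty}(\mathbb R^2)$ and $h>\sup_{\boldsymbol r}f(\boldsymbol r)$, the map $\Phi$ is bi-Lipschitz: its Jacobian determinant equals $1$ and the entries of $D\Phi$ and $D\Phi^{-1}$ are bounded by a constant depending only on $\|\nabla f\|_{L^\infty}$. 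Setting $\tilde u=u\circ\Phi^{-1}$ and using the chain rule together with the change-of-variables formula, one obtains $\|\tilde u\|_{H^1(\tilde\Omega_1)}\lesssim\|u\|_{H^1(\Omega_1)}$, with an implied constant depending only on $\|\nabla f\|_{L^\infty}$. Because $h-f(\boldsymbol r)\ge\delta_0:=h-\sup_{\boldsymbol r}f(\boldsymbol r)>0$, the flattened domain $\tilde\Omega_1$ contains the genuine slab $R=\{(\boldsymbol r,\tilde z):\boldsymbol r\in\mathbb R^2,\,0<\tilde z<\delta_0\}$, so in particular $\|\tilde u\|_{H^1(R)}\le\|\tilde u\|_{H^1(\tilde\Omega_1)}$.

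It then remains to apply Lemma \ref{tr} on the slab $R$ with $z_2=0$ and $z_1=\delta_0$. Observing that the trace of $u$ on $\Gamma_f$ coincides with the trace of $\tilde u$ on $\Gamma_0:=\{\tilde z=0\}$, namely $v(\boldsymbol r)=u(\boldsymbol r,f(\boldsymbol r))=\tilde u(\boldsymbol r,0)$, Lemma \ref{tr} yields $\|\tilde u(\cdot,0)\|_{H^{1/2}(\Gamma_0)}\le\gamma_0\|\tilde u\|_{H^1(R)}$, where $H^{1/2}(\Gamma_0)$ is understood through the Fourier-based norm used in Lemma \ref{tr} and $\gamma_0=(1+\delta_0^{-1})^{1/2}$. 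Since the Fourier-based and Gagliardo norms are equivalent on $H^{1/2}(\mathbb R^2)$, we have $\|u\|_{H^{1/2}(\Gamma_f)}=\|v\|_{H^{1/2}(\mathbb R^2)}\lesssim\|\tilde u(\cdot,0)\|_{H^{1/2}(\Gamma_0)}$. Chaining the three estimates produces the desired bound $\|u\|_{H^{1/2}(\Gamma_f)}\le C\|u\|_{H^1(\Omega_1)}$ with $C$ depending only on $\gamma_0$, $\|\nabla f\|_{L^\infty}$, and the norm-equivalence constant.

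I expect the only genuinely delicate points to be bookkeeping rather than conceptual. The main one is verifying that the Lipschitz change of variables controls the full $H^1$ norm: one must check that differentiating $\tilde u$ in $\tilde z$ and in $\boldsymbol r$ only mixes $\partial_z u$ and $\nabla_{\boldsymbol r}u$ with coefficients bounded by $\|\nabla f\|_{L^\infty}$, which is where the $W^{1,\infty}$ regularity of $f$ is essential (a merely continuous or $L^\infty$ surface would not suffice). The second point is the identification of $\|u\|_{H^{1/2}(\Gamma_f)}$ with a Sobolev norm on the base plane and its equivalence with the Fourier norm used in Lemma \ref{tr}; this is standard but should be stated explicitly so that the constant $C$ is seen to be independent of $u$.
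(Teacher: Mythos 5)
Your proof is correct and follows essentially the same route as the paper: flatten $\Gamma_f$ by a bi-Lipschitz change of variables (possible because $f\in W^{1,\infty}(\mathbb R^2)$), apply the flat-slab trace estimate of Lemma \ref{tr}, and pass between the Gagliardo-type norm defining $H^{1/2}(\Gamma_f)$ and the Fourier-based norm used in Lemma \ref{tr}. The only difference is cosmetic: you use the unit-Jacobian shear $(\boldsymbol r,z)\mapsto(\boldsymbol r,\,z-f(\boldsymbol r))$ and then restrict to the sub-slab $0<\tilde z<h-\sup f$, whereas the paper uses the rescaling $\tilde z=h(z-f)/(h-f)$ that maps $\Omega_1$ exactly onto the slab $\{0<\tilde z<h\}$; both are controlled by $\|\nabla f\|_{L^\infty}$ and by $h-\sup_{\boldsymbol r}f(\boldsymbol r)>0$.
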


\begin{proof}
Consider the change of variables:
\[
  \tilde x =x, \quad  \tilde y = y, \quad
  \tilde z= h \left(  \frac{z-f}{h-f}\right),
\]
which maps the domain $\Omega_1$ into the rectangular slab $D_1:=
\{\tilde{\boldsymbol x}=(\tilde x, \tilde y, \tilde z ) \in \mathbb R^3: 0<
\tilde z < h\}$. In particular, the surface $\Gamma_f$ is transformed to the
planar surface $\Gamma_0:=\left\{\tilde{\boldsymbol x}\in \mathbb R^3:
\tilde z =0 \right\}$. Let $J$ be the Jacobian matrix of the transformation. A
simple calculation yields that
\begin{align*}
|J|=\left|\frac{\partial(\tilde x, \tilde y, \tilde z) }{\partial (x, y,
z)}\right|=\left|
 \begin{matrix}
  1 & 0 & 0\\
  0 & 1&0\\
  \frac{h(z -h) \partial_x f}{(h -f)^2} &  \frac{h(z -h) \partial_{y} f}{(h
-f)^2} &\frac{h}{h-f}
 \end{matrix}
\right|=\frac{h}{h-f} \neq 0,
\end{align*}
which shows that the transformation is invertible. Denote by $J^{-1}$ the
inverse of the Jacobian matrix. It follows from Lemma \ref{tr} that we have
\begin{align}\label{l1}
 \|u \|_{H^{1/2} (\Gamma_0) } \lesssim \|u \|_{H^1(D_1)}.
\end{align}

Using the usual Sobolev norm in $\Omega_1$ and the change of variables, we get
\begin{align}\label{l2}
 \|u\|^2_{H^1 (\Omega_1)}
 =&\int_{\Omega_1} \left( |u (\boldsymbol x)|^2 + | \nabla u (\boldsymbol x)|^2
\right) {\rm d} \boldsymbol x \nonumber\\
 =&\int_{D_1} \bigg[ |u|^2 + \left|\partial_{\tilde x} u - \partial_{\tilde x}
f\Big(\frac {h-\tilde z}{h-f}\Big)  \partial_{\tilde z} u \right|^2 \nonumber\\
&\qquad+\left|\partial_{\tilde y}u - \partial_{\tilde y} f \Big(\frac{h - \tilde
z}{h-f}\Big) \partial_{\tilde z} u\right|^2+ \left|\Big(\frac{h}{h-f}\Big)
\partial_{\tilde z} u \right|^2 \bigg] J^{-1} {\rm d} \tilde{\boldsymbol x}
\nonumber\\
 \lesssim &  \int_{D_1} \left( |u (\tilde{\boldsymbol x})|^2 + |\nabla u
(\tilde{\boldsymbol x})|^2  \right){\rm d} \tilde{\boldsymbol x}=\|u\|^2_{H^1
(D_1)},
\end{align}
where we have used the assumption that $f\in W^{1, \infty}(\mathbb{R}^2)$. On
the other hand, we have
\begin{align}\label{l3}
 \|u \|^2_{H^1 (D_1)}
=& \int_{D_1} \left ( |u (\tilde{\boldsymbol x})|^2  + |\nabla u
(\tilde{\boldsymbol x})|^2 \right) {\rm d} \tilde{\boldsymbol x} \nonumber\\
 =& \int_{\Omega_1 } \bigg[ |u|^2  + \left|\partial_x u
+\partial_x f\Big(\frac{h-z}{h-f}\Big) \partial_z u \right|^2\nonumber\\
&\qquad +\left| \partial_y u +\partial_y f\Big(\frac{h-z}{h-f}\Big) \partial_z u
\right|^2+ \left(\partial_z u \frac{h-f}{h} \right)^2\bigg]
 J{\rm d}\boldsymbol x \nonumber \\
  \lesssim & \int_{\Omega_1} \left(  |u (\boldsymbol x)|^2 + |\nabla
u(\boldsymbol x)|^2\right) { \rm d}\boldsymbol x=\|u\|^2_{H^1(\Omega_1)}.
\end{align}
Combining \eqref{l2} and \eqref{l3}, we get that the norm
$\|u\|^2_{H^1(\Omega_1)}$ is equivalent to the norm $\|u\|^2_{H^1(D_1)}$.

Next, we prove the equivalence of the norm $\|u\|_{H^{1/2} (\Gamma_0)}$ and
the norm $\|u\|_{H^{1/2}(\Gamma_f)}$. First we have
\begin{align*}
 \| u\|^2_{H^{1/2} (\Gamma_0)}
 &= \int_{\Gamma_0} | u (\tilde{\boldsymbol r}, 0)|^2 {\rm d
}\tilde{\boldsymbol r}+\int_{\Gamma_0}\int_{\Gamma_0} \frac{ |u
(\tilde{\boldsymbol r}_1, 0) - u(\tilde{\boldsymbol r}_2,
0)|^2}{|\tilde{\boldsymbol r}_1 - \tilde{\boldsymbol
r}_2|^3}{\rm d}\tilde{\boldsymbol r}_1 {\rm d} \tilde{\boldsymbol r}_2.
\end{align*}
It follows from the change of variables that we have
\begin{align*}
\|u\|^2_{H^{1/2} (\Gamma_f)}=&
\int_{\Gamma_f} |u (\boldsymbol r, f (\boldsymbol r))|^2 {\rm d}\boldsymbol
r+\int_{\Gamma_f}\int_{\Gamma_f} \frac{ |u (\boldsymbol r_1, f (\boldsymbol
r_1)) - u(\boldsymbol r_2, f (\boldsymbol r_2))|^2}{|\boldsymbol r_1
- \boldsymbol r_2|^3}{\rm d} \boldsymbol r_1 {\rm d} \boldsymbol r_2\\
=&\int_{\Gamma_0}|u(\tilde{\boldsymbol r}, 0)|^2
(1+|\nabla_{\tilde{\boldsymbol r}} f|^2)^{1/2}{\rm
d}\tilde{\boldsymbol r}+\int_{\Gamma_0}\int_{\Gamma_0} \frac{ |u
(\tilde{\boldsymbol r}_1, 0) - u(\tilde{\boldsymbol r}_2,
0)|^2}{|\tilde{\boldsymbol r}_1 - \tilde{\boldsymbol
r}_2|^3}\\
&\times(1+|\nabla_{\tilde{\boldsymbol r}_1}f|^2)^{1/2}
(1+|\nabla_{\tilde{\boldsymbol r}_2}f|^2)^{1/2}
{\rm d}\tilde{\boldsymbol r}_1 {\rm d} \tilde{\boldsymbol r}_2.
\end{align*}
Hence we obtain
\[
 \|u\|_{H^{1/2}(\Gamma_0)}\leq \|u\|_{H^{1/2}(\Gamma_f)}\lesssim
\|u\|_{H^{1/2}(\Gamma_0)}.
\]
The proof is completed by using \eqref{l1} and the equivalence of the norms.
\end{proof}

\begin{lemma}\label{trg}
There exists a positive constant $C$ such that
\[
\|\boldsymbol u\|_{H^{1/2} (\Gamma_f)^3} \leq C\|\boldsymbol u\|_{H^1
(\Omega_2)^3}, \quad  \forall \boldsymbol u \in H^1_{\Gamma_g}(\Omega_2)^3.
\]
\end{lemma}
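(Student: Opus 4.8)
The plan is to mirror the proof of Lemma~\ref{trf}: flatten the upper surface $\Gamma_f$ by a change of variables and invoke the flat-slab trace estimate of Lemma~\ref{tr}. The essential new ingredient is to exploit the homogeneous condition $\boldsymbol u=0$ on $\Gamma_g$ to bypass the fact that $g$ is merely $L^\infty$, since one cannot flatten $\Gamma_g$ by a Lipschitz change of variables as was done for $\Gamma_f$.

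First I would extend each scalar component by zero across $\Gamma_g$. Fix a constant $z_0$ lying strictly below the surface $\Gamma_g$, which is finite because $g\in L^\infty(\mathbb R^2)$, and set $\Omega_2'=\{\boldsymbol x\in\mathbb R^3: z_0<z<f(\boldsymbol r)\}$. For $\boldsymbol u\in H^1_{\Gamma_g}(\Omega_2)^3$, define $\tilde{\boldsymbol u}$ to equal $\boldsymbol u$ in $\Omega_2$ and $0$ in $\Omega_2'\setminus\overline{\Omega_2}$. Because $\boldsymbol u$ vanishes on $\Gamma_g$, the two pieces carry the same (zero) trace there, so $\tilde{\boldsymbol u}\in H^1(\Omega_2')^3$ with $\|\tilde{\boldsymbol u}\|_{H^1(\Omega_2')^3}=\|\boldsymbol u\|_{H^1(\Omega_2)^3}$. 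The enlarged domain $\Omega_2'$ now has a flat bottom $\{z=z_0\}$ and the $W^{1,\infty}$ top $\Gamma_f$, exactly the configuration treated in Lemma~\ref{trf} (with the roles of top and bottom interchanged).

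Next I would apply the change of variables $\tilde x=x,\ \tilde y=y,\ \tilde z= L\,\frac{f-z}{f-z_0}$, which carries $\Omega_2'$ onto the flat slab $D_2=\{0<\tilde z<L\}$ and sends $\Gamma_f$ to $\Gamma_0=\{\tilde z=0\}$. Since $f\in W^{1,\infty}$ and $f-z_0$ is bounded below by a positive constant, the Jacobian and its inverse are bounded, so repeating the computation behind Lemma~\ref{trf} gives the norm equivalence between $\|\tilde{\boldsymbol u}\|_{H^1(\Omega_2')^3}$ and $\|\tilde{\boldsymbol u}\|_{H^1(D_2)^3}$; the accompanying surface change of variables, which involves only $f$, yields $\|u_j\|_{H^{1/2}(\Gamma_f)}\lesssim\|u_j\|_{H^{1/2}(\Gamma_0)}$ precisely as in the $H^{1/2}$ equivalence already established in Lemma~\ref{trf}. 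Applying Lemma~\ref{tr} to each component on $D_2$ then gives $\|u_j\|_{H^{1/2}(\Gamma_0)}\le\gamma_0\|u_j\|_{H^1(D_2)}$. Chaining these three estimates componentwise and summing the squares over $j=1,2,3$ produces $\|\boldsymbol u\|_{H^{1/2}(\Gamma_f)^3}\le C\|\boldsymbol u\|_{H^1(\Omega_2)^3}$.

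The step I expect to require the most care is the zero extension across $\Gamma_g$. Because $g$ is only $L^\infty$, one cannot flatten $\Gamma_g$ directly, and no uniform trace bound for an arbitrary $H^1(\Omega_2)$ function is available when $f-g$ fails to be bounded away from zero and $\Omega_2$ becomes thin. It is exactly the hypothesis $\boldsymbol u\in H^1_{\Gamma_g}(\Omega_2)^3$ that renders the zero extension admissible in $H^1(\Omega_2')^3$ and supplies an enlarged domain with a regular bottom on which Lemma~\ref{tr} can be invoked; this explains why the vanishing condition on $\Gamma_g$, rather than membership in the full space $H^1(\Omega_2)^3$, is needed in the statement.
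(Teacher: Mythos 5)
Your proposal is correct and follows essentially the same route as the paper: extend $\boldsymbol u$ by zero below $\Gamma_g$ into an enlarged domain with a flat bottom (made possible precisely by the vanishing condition on $\Gamma_g$), observe that both the $H^{1/2}(\Gamma_f)$ and $H^1$ norms are unchanged, and then apply the flattening argument of Lemma~\ref{trf} together with the slab trace estimate of Lemma~\ref{tr}. Your write-up is in fact more explicit than the paper's (which simply cites Lemmas~\ref{tr} and~\ref{trf} after the extension), and your closing remark correctly identifies why membership in $H^1_{\Gamma_g}(\Omega_2)^3$, rather than $H^1(\Omega_2)^3$, is what makes the extension admissible.
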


\begin{proof}
 Denote $\tilde{\Omega}_2=\{\boldsymbol{x}\in\mathbb{R}^3:
\boldsymbol{r}\in\mathbb{R}^2,\, \inf_{{\boldsymbol
r}\in\mathbb{R}^2}g(\boldsymbol{r})<z<f(\boldsymbol r)\}$ which contains the
domain $\Omega_2$. For any $\boldsymbol{u}\in H^1_{\Gamma_g}(\Omega_2)^3$, we
consider the zero extension to $\tilde{\Omega}_2$:
\begin{align*}
 \tilde{\boldsymbol u} (\boldsymbol x)=
 \begin{cases}
  \boldsymbol{u} (\boldsymbol x), \quad  &\boldsymbol x \in
\Omega_2,\\
  0, \quad  & \boldsymbol x \in \tilde{\Omega}_2 \setminus \bar{\Omega}_2.
  \end{cases}
\end{align*}
It is clear to note that
\begin{equation}\label{trg_s1}
 \|\boldsymbol{u}\|_{H^{1/2}(\Gamma_f)^3}=\|\tilde{\boldsymbol u}\|_{H^{1/2}
(\Gamma_f)^3},\quad \|\boldsymbol{u}\|_{H^1(\Omega_2)^3}=\|\tilde{\boldsymbol
u}\|_{H^1(\tilde{\Omega}_2)^3}.
\end{equation}
It follows from Lemmas \ref{tr} and \ref{trf} that there exists a positive
constant $C$ such that
\begin{equation}\label{trg_s2}
 \|\tilde{\boldsymbol u}\|_{H^{1/2} (\Gamma_f)^3}\leq C\|\tilde{\boldsymbol
u}\|_{H^1(\tilde{\Omega}_2)^3}.
\end{equation}
Combining \eqref{trg_s1} and \eqref{trg_s2} completes the proof.
\end{proof}

\begin{lemma}\label{cc}
 The DtN operator $\mathscr{B}: H^{1/2} (\Gamma_h) \to H^{-1/2}
(\Gamma_h)$ is continuous, i.e.,
 \begin{align*}
  \|\mathscr{B}u\|_{H^{-1/2} (\Gamma_h)} \lesssim \| u\|_{H^{1/2}
(\Gamma_h)},\quad\forall u\in H^{1/2} (\Gamma_h).
 \end{align*}
\end{lemma}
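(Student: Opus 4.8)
The plan is to work entirely on the Fourier side, since both the operator $\mathscr{B}$ and the Sobolev norms on $\Gamma_h$ are diagonalized by the Fourier transform in $\boldsymbol r$. From the definition \eqref{dtn}, the Fourier transform of $\mathscr{B}u$ is simply $\widehat{\mathscr{B}u}(\boldsymbol\xi,h)=-\beta(\boldsymbol\xi)\hat u(\boldsymbol\xi,h)$, so by the definition of the $H^{-1/2}(\Gamma_h)$ norm,
\[
\|\mathscr{B}u\|^2_{H^{-1/2}(\Gamma_h)}=\int_{\mathbb R^2}(1+|\boldsymbol\xi|^2)^{-1/2}|\beta(\boldsymbol\xi)|^2|\hat u(\boldsymbol\xi,h)|^2\,{\rm d}\boldsymbol\xi.
\]
Comparing this against $\|u\|^2_{H^{1/2}(\Gamma_h)}=\int_{\mathbb R^2}(1+|\boldsymbol\xi|^2)^{1/2}|\hat u(\boldsymbol\xi,h)|^2\,{\rm d}\boldsymbol\xi$, the whole lemma reduces to the pointwise symbol estimate $|\beta(\boldsymbol\xi)|^2\lesssim 1+|\boldsymbol\xi|^2$ uniformly in $\boldsymbol\xi$.

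To establish this estimate, I would exploit the identity $|\beta(\boldsymbol\xi)|^2=|\beta^2(\boldsymbol\xi)|$, valid for any complex number. Using the explicit formula \eqref{B1} for $\beta^2$ together with the triangle inequality,
\[
|\beta(\boldsymbol\xi)|^2=\Big|\frac{s^2}{c^2}+|\boldsymbol\xi|^2\Big|\leq\frac{|s|^2}{c^2}+|\boldsymbol\xi|^2\leq\max\Big(\frac{|s|^2}{c^2},1\Big)\big(1+|\boldsymbol\xi|^2\big),
\]
which is exactly the required bound, with an implied constant depending only on the fixed frequency $s$ and the sound speed $c$. Substituting back yields $\|\mathscr{B}u\|^2_{H^{-1/2}(\Gamma_h)}\leq\max(|s|^2/c^2,1)\,\|u\|^2_{H^{1/2}(\Gamma_h)}$, which completes the argument.

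There is no serious obstacle here: the proof collapses to a one-line estimate on the Fourier multiplier once $\mathscr{B}$ is written through its symbol $-\beta(\boldsymbol\xi)$. The only point worth flagging is that the continuity constant is $s$-dependent and in fact grows like $|s|^2$, so this is genuinely a frequency-domain (fixed $s$) statement rather than a bound uniform in $s$. Consequently, any later passage back to the time domain will have to track this polynomial growth in $s$ explicitly, for example through the Paley--Wiener--Schwarz-type estimate in Lemma \ref{A2}, rather than treating $\mathscr{B}$ as a uniformly bounded operator.
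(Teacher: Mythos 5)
Your proof is correct and follows essentially the same route as the paper: pass to the Fourier side, identify the symbol $-\beta(\boldsymbol\xi)$ of $\mathscr{B}$, and reduce the lemma to the pointwise bound $|\beta(\boldsymbol\xi)|^2=\left|\frac{s^2}{c^2}+|\boldsymbol\xi|^2\right|\leq\frac{|s|^2}{c^2}+|\boldsymbol\xi|^2\lesssim 1+|\boldsymbol\xi|^2$. Your additional remark that the implied constant grows like $|s|^2$ is accurate and consistent with the paper's (s-dependent) use of $\lesssim$, but it does not change the argument.
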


\begin{proof}
 For any $u \in H^{1/2} (\Gamma_h)$, it follows from \eqref{dtn} and
\eqref{B1} that
 \begin{align*}
  \|\mathscr{B} u\|^2_{H^{-1/2} (\Gamma_h)}
  =&\int_{\mathbb R^2} (1+|\boldsymbol \xi| ^2) ^{-1/2} | -\beta (\boldsymbol
\xi) \hat  u ( \boldsymbol \xi, h) |^2 {\rm d} \boldsymbol \xi\\
  = &\int_{\mathbb R^2} (1+|\boldsymbol \xi|^2)^{1/2}
  (1+|\boldsymbol \xi|^2)^{-1} |\beta ( \boldsymbol \xi)|^2 |\hat u
(\boldsymbol\xi, h)|^2 {\rm d} \boldsymbol  \xi
  \lesssim \|u\|^2_{H^{1/2} (\Gamma_h)},
 \end{align*}
where we have used
\[
 |\beta (\boldsymbol \xi)|^2 = \left|\frac{s^2}{c^2} + |\boldsymbol \xi|^2
\right| \leq \frac{|s|^2}{c^2} + |\boldsymbol \xi|^2 \lesssim 1+ |\boldsymbol
\xi|^2,
\]
which completes the proof.
\end{proof}

\begin{lemma}\label{tp}
 We have
 \[
  -{\rm Re} \langle s^{-1} \mathscr{B} u,  u \rangle_{\Gamma_h} \geq 0, \quad
\forall u \in H^{1/2}(\Gamma_h).
 \]
\end{lemma}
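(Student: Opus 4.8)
The plan is to diagonalize the quadratic form by the Fourier transform and reduce the claim to a pointwise inequality in the frequency variable $\boldsymbol\xi$. First I would insert the definition \eqref{dtn} of $\mathscr B$ into the scalar product and apply the Parseval identity built into the definition of $\langle\cdot,\cdot\rangle_{\Gamma_h}$. Since the Fourier coefficient of $\mathscr B u$ is $-\beta(\boldsymbol\xi)\hat u(\boldsymbol\xi, h)$, this gives
\begin{align*}
 \langle s^{-1}\mathscr B u, u\rangle_{\Gamma_h}
 = -\int_{\mathbb R^2} s^{-1}\beta(\boldsymbol\xi)\,|\hat u(\boldsymbol\xi, h)|^2\,{\rm d}\boldsymbol\xi,
\end{align*}
so that
\begin{align*}
 -{\rm Re}\,\langle s^{-1}\mathscr B u, u\rangle_{\Gamma_h}
 = \int_{\mathbb R^2} {\rm Re}\big(s^{-1}\beta(\boldsymbol\xi)\big)\,|\hat u(\boldsymbol\xi, h)|^2\,{\rm d}\boldsymbol\xi.
\end{align*}
As the weight $|\hat u(\boldsymbol\xi, h)|^2$ is nonnegative, it suffices to show that ${\rm Re}\big(s^{-1}\beta(\boldsymbol\xi)\big)\geq 0$ for each fixed $\boldsymbol\xi\in\mathbb R^2$.

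Next, using $s^{-1}=\bar s/|s|^2$, I would note ${\rm Re}(s^{-1}\beta)=|s|^{-2}{\rm Re}(\bar s\beta)$, so the task reduces to proving ${\rm Re}(\bar s\beta)\geq 0$. Writing $s=s_1+{\rm i}s_2$ with $s_1>0$ and $\beta=b_1+{\rm i}b_2$ with $b_1={\rm Re}\,\beta>0$ by \eqref{B1}, one has ${\rm Re}(\bar s\beta)=s_1 b_1+s_2 b_2$. The key relation comes from comparing imaginary parts in the defining identity $\beta^2=s^2/c^2+|\boldsymbol\xi|^2$: since $|\boldsymbol\xi|^2$ is real, $2 b_1 b_2=2 s_1 s_2/c^2$, that is $b_1 b_2=s_1 s_2/c^2$. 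Substituting $b_2=s_1 s_2/(c^2 b_1)$ yields
\begin{align*}
 {\rm Re}(\bar s\beta)=s_1 b_1+\frac{s_1 s_2^2}{c^2 b_1}=\frac{s_1}{b_1}\Big(b_1^2+\frac{s_2^2}{c^2}\Big)\geq 0,
\end{align*}
because $s_1>0$ and $b_1>0$. This establishes the pointwise inequality and hence the lemma.

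The only delicate point is the elimination of $b_2$, which depends essentially on the sign normalization $b_1={\rm Re}\,\beta>0$ fixed in \eqref{B1}: choosing the other branch of the square root would flip the sign of $b_2$ and could invalidate the estimate. I therefore expect the main (though modest) obstacle to be invoking \eqref{B1} carefully to guarantee $b_1>0$, so that dividing by $b_1$ is legitimate and the final expression is manifestly nonnegative; everything else is a routine Parseval computation.
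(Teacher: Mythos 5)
Your proposal is correct and follows essentially the same route as the paper's own proof: diagonalize via the Fourier transform, reduce to the pointwise sign of ${\rm Re}(\bar s\,\beta)=s_1 a+s_2 b$, and eliminate $b$ using the relation $ab=s_1 s_2/c^2$ obtained from the imaginary part of $\beta^2=s^2/c^2+|\boldsymbol\xi|^2$, together with the branch choice ${\rm Re}\,\beta>0$ from \eqref{B1}. No substantive differences to report.
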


\begin{proof}
A simple calculation yields that
 \[
  - \langle s^{-1} \mathscr{B} u, u \rangle_{\Gamma_h}
  =    \int_{\mathbb R^2}  s^{-1} \beta (\boldsymbol\xi) |\hat u (\boldsymbol
\xi, h)|^2 {\rm d}\boldsymbol \xi= \int_{\mathbb R^2}  \frac{\bar s \beta
(\boldsymbol \xi)}{|s|^2}  |\hat u
(\boldsymbol\xi, h)|^2 {\rm d} \boldsymbol \xi
 \]
Let $\beta (\boldsymbol\xi) =a+{\rm i } b, s=s_1 +{\rm i} s_2$ with $a>0,
s_1>0$.
Taking the real part of the above equation gives
\begin{align}\label{i1}
- {\rm Re} \langle s^{-1}\mathscr{B} u, u \rangle_{\Gamma_h} = \int_{\mathbb
R^2} \frac{(s_1 a + s_2 b )}{|s|^2} |\hat u (\boldsymbol \xi, h)|^2 {\rm d}
\boldsymbol \xi.
\end{align}
Recalling $\beta^2 (\boldsymbol \xi) = \frac{s^2}{c^2 } + | \boldsymbol \xi|^2$,
we have
\begin{align}
  a^2-b^2 = \frac{s_1^2 -s_2^2 }{c^2} +|\boldsymbol  \xi|^2, \quad
  a b=\frac{s_1 s_2}{c^2}. \label{I2}
\end{align}
Substituting \eqref{I2} into \eqref{i1} yields
\begin{align*}
 - {\rm Re} \langle s^{-1}\mathscr{B} u, u \rangle_{\Gamma_h} =\int_{\mathbb
R^2} \frac{1}{|s|^2} \left( a s_1 + \frac{s_1}{a } \frac{s_2^2}{c^2} \right)
|\hat u ( \boldsymbol \xi, h)|^2 {\rm d}\boldsymbol  \xi \geq 0,
\end{align*}
which completes the proof.
\end{proof}

For any function $ u (\boldsymbol r, h)$ defined on $\Gamma_h$,
using the DtN operator \eqref{dtn}, we can obtain the following TBC in the
$s$-domain:
\begin{align}\label{tbc1}
 \partial_{\boldsymbol \nu } \breve p =\mathscr{B} \breve p \quad \text
{on}~\Gamma_h.
\end{align}
Taking the inverse Laplace transform of \eqref{tbc1} yields the TBC in the
time-domain:
\[
 \partial_{\boldsymbol \nu} p = \mathscr T p \quad  \text {on}~ \Gamma_h.
\]

\section{The reduced problem} \label{srp}

In this section, we present the main results of this paper, which include the
well-posedness and stability of the scattering problem and related a priori
estimates.

\subsection {Well-posedness in the $s$-domain}

Consider the reduced problem in the $s$-domain:
\begin{subequations}\label{saep}
\begin{numcases}{}
  \label{rp1} \Delta  \breve {p}  - \frac{s^2}{c^2 } \breve   p =0 & \quad
\text{in}~ $\Omega_1$,\\
  \label{rp2} \mu \Delta \breve { \boldsymbol u } +  (\lambda+\mu) \nabla \nabla
\cdot \breve { \boldsymbol u} -\rho_2 s^2  \breve {\boldsymbol u} = \breve
{\boldsymbol j} & \quad   \text {in}~ $\Omega_2$,\\
 \label{rp3}\partial_{\boldsymbol n}\breve{p}=-\rho_1 s^2 \boldsymbol n\cdot
\breve {\boldsymbol u}, \quad - \breve {p} \boldsymbol n = \boldsymbol \sigma (
\breve {\boldsymbol u}) \cdot \boldsymbol n & \quad \text{on}~$\Gamma_f$,\\
 \label{rp5}\partial_{\boldsymbol \nu} \breve p= \mathscr{B}  {\breve p}, &
\quad \text{on}~$\Gamma_h$,\\
  \label{rp6}\breve {\boldsymbol u} =0 & \quad  \text {on}~ $\Gamma_g$.
\end{numcases}
\end{subequations}
Multiplying \eqref{rp1} and \eqref{rp2} by the complex conjugate of a test
function $q \in H^1(\Omega_1)$ and a test function $\boldsymbol v \in
H^1_{\Gamma_g} (\Omega_2)^3 $, respectively, using the integration by parts and
boundary conditions, which include the TBC condition \eqref{rp5}, the
kinematic and dynamic interface conditions \eqref{rp3}, and the rigid
boundary condition \eqref{rp6}, we arrive at the variational problem: To find
$(\breve p, \breve {\boldsymbol u}) \in H^1(\Omega_1) \times H_{\Gamma_g}^1(\Omega_2)^3$  such that
\begin{equation} \label{iv1}
 \int_{\Omega_1} \left( \frac{1}{s}\nabla \breve p \cdot \nabla {\bar q}  +
\frac{s}{c^2} \breve p \bar q  \right) {\rm d} \boldsymbol  x-
 \langle  s^{-1}\mathscr{B} \breve p,  q \rangle_{\Gamma_h} - \rho_1  s
\int_{\Gamma_f}(\boldsymbol n \cdot \breve {\boldsymbol u}) \bar q {\rm } {\rm
d} {\gamma}=0
\end{equation}
and
\begin{align} \label{iv2}
 \int_{\Omega_2} & \frac{1}{s}\left(\left( \mu (\nabla {\breve {\boldsymbol
u}}: \nabla \bar {\boldsymbol v}) + (\lambda+\mu ) (\nabla \cdot \breve
{\boldsymbol u}) (\nabla \cdot \bar
{\boldsymbol v})\right) + \rho_2 s  \breve {\boldsymbol u} \cdot \bar
{\boldsymbol v} \right) {\rm d} \boldsymbol x \notag\\
& +\frac{1}{s} \int_{\Gamma_f} \breve  p (\boldsymbol n \cdot \bar {\boldsymbol
v}){\rm d} \gamma= - \int_{\Omega_2} \frac{1}{s} \breve {\boldsymbol j} \cdot
\bar {\boldsymbol v} {\rm d} \boldsymbol x,\quad\forall (q, \boldsymbol v)\in
H^1(\Omega_1)\times H^1_{\Gamma_g}(\Omega_2),
\end{align}
where $A:B= {\rm tr} \left( A B^{\top}\right)$ is the Frobenius inner product of
square matrices $A$ and $B$.

We multiply \eqref{iv2} by $\rho_1 |s|^2$ and add the obtained result to \eqref{iv1} to obtain an
equivalent variational problem: To find $(\breve p, \breve {\boldsymbol u}) \in
H^1(\Omega_1) \times H_{\Gamma_g}^1 (\Omega_2)^3$  such that
\begin{align}\label{ivp}
 a\left( \breve p, \breve {\boldsymbol u}; q, \boldsymbol v \right) =
 -\int_{\Omega_2}   \rho_1 \bar s   \breve  {\boldsymbol j} \cdot \bar
{\boldsymbol v} {\rm d} \boldsymbol  x, \quad \forall (q, \boldsymbol v) \in
H^1(\Omega_1) \times H_{\Gamma_g}^1
(\Omega_2)^3,
\end{align}
where the sesquilinear form
\begin{align}
 a\left( \breve p, \breve {\boldsymbol u}; q, \boldsymbol v \right)=
 &  \int_{\Omega_1} \left( \frac{1}{s}  \nabla \breve p \cdot \nabla {\bar q}  +
\frac{s}{c^2} \breve p \bar q  \right) {\rm d} \boldsymbol x + \int_{\Omega_2}
\Big( \rho_1 \bar s\big( \mu (\nabla {\breve {\boldsymbol
u}} : \nabla \bar {\boldsymbol v}) \notag\\
& + (\lambda+\mu ) (\nabla \cdot \breve {\boldsymbol u}) (\nabla \cdot \bar
{\boldsymbol v})\big) + \rho_1 \rho_2 s |s|^2  \breve {\boldsymbol u} \cdot \bar
{\boldsymbol v}  \Big) {\rm d} \boldsymbol  x -\langle s^{-1} \mathscr{B}\breve
p, q \rangle_{\Gamma_h}\notag\\
&+\rho_1 \int_{\Gamma_f} \left( \bar s  \breve p (\boldsymbol n \cdot \bar
{\boldsymbol v}) -s  \bar q (\boldsymbol n \cdot \breve {\boldsymbol u}) \right)
{\rm d} \gamma. \label{slf}
\end{align}

\begin{theorem}\label{wps}
 The variational problem \eqref{ivp} has a unique weak solution $(\breve p,
\breve {\boldsymbol u}) \in H^1 (\Omega_1) \times H^1_{\Gamma_g}(\Omega_2)^3$,
which satisfies
 \begin{align}
\label{sep} \|\nabla \breve {p}\|_{L^2 (\Omega_1)^3} +\|s \breve p\|_{L^2
(\Omega_1)}
&\lesssim \|\breve {\boldsymbol j}\|_{L^2(\Omega_2)^3},\\
\label{seu}  \|\nabla \breve {{\boldsymbol u}}\|_{F(\Omega_2)}
 +\|\nabla \cdot \breve {\boldsymbol u}\|_{L^2(\Omega_2)}+ \| s\breve
{\boldsymbol u}\|_{L^2 (\Omega_2)^3} &\lesssim \frac{1}{ |s|} \|\breve
{\boldsymbol j}\|_{L^2(\Omega_2)^3}.
 \end{align}

\end{theorem}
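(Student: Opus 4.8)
The plan is to solve the variational problem \eqref{ivp} by the Lax--Milgram lemma, which requires verifying that the sesquilinear form $a(\cdot,\cdot;\cdot,\cdot)$ in \eqref{slf} is bounded and coercive on $H^1(\Omega_1)\times H^1_{\Gamma_g}(\Omega_2)^3$ for fixed $s$, and that the right-hand side of \eqref{ivp} defines a bounded antilinear functional. The latter is immediate: by the Cauchy--Schwarz inequality, $\bigl|\int_{\Omega_2}\rho_1\bar s\,\breve{\boldsymbol j}\cdot\bar{\boldsymbol v}\,{\rm d}\boldsymbol x\bigr|\leq\rho_1|s|\,\|\breve{\boldsymbol j}\|_{L^2(\Omega_2)^3}\|\boldsymbol v\|_{L^2(\Omega_2)^3}$, which is controlled by $\|\boldsymbol v\|_{H^1(\Omega_2)^3}$.

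For boundedness I would estimate the terms of \eqref{slf} one by one. The volume integrals over $\Omega_1$ and $\Omega_2$ are bounded by $\|\breve p\|_{H^1(\Omega_1)}\|q\|_{H^1(\Omega_1)}$ and $\|\breve{\boldsymbol u}\|_{H^1(\Omega_2)^3}\|\boldsymbol v\|_{H^1(\Omega_2)^3}$ (with $s$-dependent constants) directly from Cauchy--Schwarz. The boundary term on $\Gamma_h$ is handled by the continuity of the DtN operator in Lemma \ref{cc} together with the trace estimate in Lemma \ref{tr}, while the two coupling integrals on $\Gamma_f$ are controlled in $\|\cdot\|_{H^{1/2}(\Gamma_f)}$ and then lifted to the volume norms by the trace Lemmas \ref{trf} and \ref{trg}. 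Since $s$ is fixed in the frequency domain, the $s$-dependence of the continuity constant is harmless.

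The crux is coercivity, and the hard part will be controlling the indefinite-looking boundary contributions. I would test with $(q,\boldsymbol v)=(\breve p,\breve{\boldsymbol u})$ and take the real part of \eqref{slf}. The key structural observation is that the two interface integrals on $\Gamma_f$ combine into $\rho_1(\bar s W-s\bar W)$ with $W=\int_{\Gamma_f}\breve p\,(\boldsymbol n\cdot\bar{\breve{\boldsymbol u}})\,{\rm d}\gamma$, which is purely imaginary and therefore drops out upon taking the real part; this cancellation is exactly why the form was symmetrized by the factor $\rho_1|s|^2$. The boundary term on $\Gamma_h$ contributes $-{\rm Re}\langle s^{-1}\mathscr B\breve p,\breve p\rangle_{\Gamma_h}\geq0$ by Lemma \ref{tp}. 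What remains are the volume terms, whose real parts are $\frac{s_1}{|s|^2}\|\nabla\breve p\|^2_{L^2(\Omega_1)^3}+\frac{s_1}{c^2}\|\breve p\|^2_{L^2(\Omega_1)}$ together with the elastic contributions $\rho_1 s_1\mu\|\nabla\breve{\boldsymbol u}\|^2_{F(\Omega_2)}+\rho_1 s_1(\lambda+\mu)\|\nabla\cdot\breve{\boldsymbol u}\|^2_{L^2(\Omega_2)}+\rho_1\rho_2 s_1|s|^2\|\breve{\boldsymbol u}\|^2_{L^2(\Omega_2)^3}$. All of these are nonnegative because $s_1={\rm Re}\,s>0$, $\mu>0$, and $\lambda+\mu>0$, which yields coercivity in the natural $s$-weighted energy norm and hence, via Lax--Milgram, the existence and uniqueness of the weak solution.

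Finally, the a priori estimates \eqref{sep} and \eqref{seu} follow by bounding the same coercivity identity from above. With $(q,\boldsymbol v)=(\breve p,\breve{\boldsymbol u})$, the sum of the nonnegative volume terms above equals $\mathrm{Re}$ of the right-hand side and is therefore bounded by $\rho_1|s|\,\|\breve{\boldsymbol j}\|_{L^2(\Omega_2)^3}\|\breve{\boldsymbol u}\|_{L^2(\Omega_2)^3}$. I would first play the elastic mass term $\rho_1\rho_2 s_1|s|^2\|\breve{\boldsymbol u}\|^2$ against this bound, dividing out one power of $\|\breve{\boldsymbol u}\|$ (equivalently applying Young's inequality to absorb it), so as to control $\|s\breve{\boldsymbol u}\|_{L^2(\Omega_2)^3}$ as in \eqref{seu}. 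Substituting this back bounds the full right-hand side by a multiple of $\|\breve{\boldsymbol j}\|^2_{L^2(\Omega_2)^3}$, after which the remaining coercive terms directly yield the estimates for $\|\nabla\breve{\boldsymbol u}\|_{F(\Omega_2)}$ and $\|\nabla\cdot\breve{\boldsymbol u}\|_{L^2(\Omega_2)}$ in \eqref{seu}, and for $\|\nabla\breve p\|_{L^2(\Omega_1)^3}$ and $\|s\breve p\|_{L^2(\Omega_1)}$ in \eqref{sep}, with the stated powers of $|s|$ read off from the respective weights.
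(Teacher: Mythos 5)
Your proposal follows essentially the same route as the paper's proof: boundedness via the Cauchy--Schwarz inequality together with Lemmas \ref{tr}--\ref{cc}, coercivity by testing with $(q,\boldsymbol v)=(\breve p,\breve{\boldsymbol u})$ and invoking Lemma \ref{tp} for the $\Gamma_h$ term (the paper likewise relies on the purely imaginary cancellation of the $\Gamma_f$ terms, though it leaves this implicit), then the Lax--Milgram lemma, and finally the estimates \eqref{sep}--\eqref{seu} by playing the coercive lower bound against the Cauchy--Schwarz upper bound on the right-hand side and absorbing $\|s\breve{\boldsymbol u}\|_{L^2(\Omega_2)^3}$. The argument is correct and matches the paper's in all essentials.
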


\begin{proof}
We have from the Cauchy--Schwarz inequality and Lemmas
\ref{tr}--\ref{cc} that
\begin{align*}
 |a\left( \breve p, \breve {\boldsymbol u}; q, \boldsymbol v \right)|
 \leq&\frac{1}{|s|} \|\nabla \breve p\|_{L^2 (\Omega_1)^3} \|\nabla
{q}\|_{L^2(\Omega_1)^3} + \frac{|s|}{c^2} \|\breve p\|_{L^2 (\Omega_1)} \|  q
\|_{L^2 (\Omega_1)}\\
 &\quad +  \rho_1 |s| \left( \mu \| \nabla \breve {\boldsymbol u}\|_{F
(\Omega_2)} \|\nabla {\boldsymbol v}\|_{F (\Omega_2)}
 + (\lambda+ \mu) \| \nabla \cdot \breve {\boldsymbol u}\|_{L^2 (\Omega)} \|
\nabla \cdot {\boldsymbol v}\|_{L^2 (\Omega)}\right) \\
 &\quad + \rho_1 \rho_2 |s|^3 \| \breve {\boldsymbol u}\|_{L^2 (\Omega_2)^3}
\|\boldsymbol v\|_{L^2 (\Omega_2)^3}
 + \frac{1}{|s|} \|\mathscr{B} \breve p\|_{H^{-1/2} (\Gamma_h)} \|q\|_{H^{1/2}
(\Gamma_h)}\\
 &\quad + \rho_1 |s| \left(\|p\|_{L^2 (\Gamma_f) }
\|\boldsymbol n\cdot\boldsymbol v\|_{L^2 (\Gamma_f)}
 +\| q\|_{L^2 (\Gamma_f)} \| \boldsymbol n \cdot \breve {\boldsymbol
u}\|_{L^2 (\Gamma_f)}  \right)\\
 \lesssim & \| \breve p\|_{H^1 (\Omega_1)} \|q\|_{H^1 (\Omega_1)} +\| \breve
{\boldsymbol u}\|_{H^1 (\Omega_2)^3}
 \| \boldsymbol v\|_{H^1 (\Omega_2)^3} +\|\breve
p\|_{H^{1/2}(\Gamma_h)}\|q\|_{H^{1/2}(\Gamma_h)}\\
 &\quad + \|\breve p \|_{H^{1/2} (\Gamma_f)}
 \| \boldsymbol v\|_{H^{1/2} (\Gamma_f)^3}+\|q\|_{H^{1/2}(\Gamma_f)}\|\breve
{\boldsymbol u}\|_{H^{1/2}(\Gamma_f)^3}\\
 \lesssim & \| \breve p\|_{H^1 (\Omega_1)} \|q\|_{H^1 (\Omega_1)} +\| \breve
{\boldsymbol u}\|_{H^1 (\Omega_2)^3}
 \| \boldsymbol v\|_{H^1 (\Omega_2)^3} +\|\breve
p\|_{H^1(\Omega_1)}\|q\|_{H^1(\Omega_1)}\\
 &\quad +\|\breve p\|_{H^1(\Omega_1)}
 \|\boldsymbol v\|_{H^1(\Omega_2)^3} +\|q\|_{H^1 (\Omega_1)}
\|\breve {\boldsymbol u}\|_{H^1(\Omega_2)^3},
\end{align*}
which shows that the sesquilinear form is bounded.

Letting $(q, \boldsymbol v) =(\breve p, \breve {\boldsymbol u})$ in \eqref{slf}
yields
\begin{align}
 a(\breve p, \breve {\boldsymbol u};~ \breve p, \breve {\boldsymbol
u})=& \int_{\Omega_1} \left( \frac{1}{s} |\nabla \breve p|^2 + \frac{s}{c^2}
|\breve p|^2\right){\rm d} \boldsymbol  x +
 \int_{\Omega_2} \big(  \rho_1 \bar s \left( \mu (\nabla \breve {\boldsymbol u}:
\nabla \bar {\breve {\boldsymbol u}})+
 (\lambda+\mu) |\nabla \cdot \breve {\boldsymbol u}|^2\right) \nonumber\\
 &+\rho_1 \rho_2 s |s|^2 |\breve {\boldsymbol u}|^2\big) {\rm d}
\boldsymbol x -\langle s^{-1}\mathscr{B} \breve p, \breve p \rangle_{\Gamma_h}
 +\rho_1 \int_{\Gamma_f} \left( \bar s \breve p (\boldsymbol n \cdot \bar{
\breve{\boldsymbol u}}) - s \bar {\breve p}( \boldsymbol n \cdot \breve
{\boldsymbol u})\right) {\rm d}
\gamma.        \label{pqu}
\end{align}
Taking the real part of \eqref{pqu} and using Lemma \ref{tp}, we obtain
\begin{align}
 {\rm Re } ( a(\breve p, \breve {\boldsymbol u}; \breve p, \breve {\boldsymbol
u}) ) =&\int_{\Omega_1}\left( \frac{s_1}{|s|^2}|\nabla \breve p|^2
+\frac{s_1}{c^2 } |\breve p|^2\right) {\rm d} \boldsymbol  x
 +\rho_1 s_1 \Big( \|\nabla \breve {{\boldsymbol u}}\|^2_{F(\Omega_2)}\notag\\
&\quad +(\lambda+\mu) \|\nabla \cdot \breve {\boldsymbol u}\|^2_{L^2(\Omega_2)}
\Big)+\rho_1 \rho_2  s_1 |s|^2  |\breve {\boldsymbol u}|^2_{L^2
(\Omega_2)^3} - {\rm Re } \langle s^{-1} \mathscr{B} \breve p, \breve p
\rangle_{\Gamma_h} \notag\\
 \gtrsim &  \frac{s_1}{|s|^2 }
  \left( \|\nabla \breve {p}\|^2_{L^2 (\Omega_1)^3} +\|s \breve p\|^2_{L^2
(\Omega_1)} \right) \notag\\
&\quad + s_1   \left( \|\nabla \breve {{\boldsymbol u}}\|^2_{F(\Omega_2)}
+\|\nabla \cdot \breve {\boldsymbol u}\|^2_{L^2(\Omega_2)}+ \| s\breve
{\boldsymbol u}\|^2_{L^2 (\Omega_2)^3} \right).\label{RP}
\end{align}
It follows from the Lax--Milgram lemma that the variational problem \eqref{ivp}
has a unique weak solution $(\breve p, \breve {\boldsymbol u}) \in H^1(\Omega_1)
\times H^1_{\Gamma_g} (\Omega_2)^3.$

Moreover, we have from \eqref{ivp}  that
\begin{align}\label{RP2}
 |a(\breve p, \breve {\boldsymbol u}; \breve p, \breve {\boldsymbol u}) |
 \lesssim \frac{s_1}{|s|} \|\breve {\boldsymbol j}\|_{L^2 (\Omega_2)^3} \| s
\breve {\boldsymbol u}\|_{L^2 (\Omega_2)^3 }.
\end{align}
Combing \eqref{RP} and \eqref{RP2}  leads to
\begin{align*}
\|\nabla \breve {{\boldsymbol u}}\|^2_{F(\Omega_2)}
& +\|\nabla \cdot \breve {\boldsymbol u}\|^2_{L^2(\Omega_2)}+ \| s\breve
{\boldsymbol u}\|^2_{L^2 (\Omega_2)^3} \\
&\lesssim  \frac{1}{s_1} |a(\breve p, \breve {\boldsymbol u}; \breve p, \breve
{\boldsymbol u}) | \lesssim \frac{1}{|s|} \|\breve {\boldsymbol j}\|_{L^2
(\Omega_2)^3} \| s \breve {\boldsymbol u}\|_{L^2 (\Omega_2)^3 }
\end{align*}
and
\begin{align*}
 \frac{1}{|s|^2} \Big( \|\nabla \breve {p}\|^2_{L^2 (\Omega_1)^3} &+\|s \breve
p\|^2_{L^2 (\Omega_1)} \Big) +|s \breve {\boldsymbol u}|^2_{L^2
(\Omega_2)^3}\\
&\lesssim \frac{1}{s_1} |a(\breve p, \breve {\boldsymbol u}; \breve p,
\breve {\boldsymbol u}) | \lesssim \frac{1}{|s|} \|\breve {\boldsymbol j}\|_{L^2
(\Omega_2)^3} \| s \breve {\boldsymbol u}\|_{L^2 (\Omega_2)^3 }.
\end{align*}
Using the Cauchy--Schwarz inequality, we obtain
\begin{align*}
 \|\nabla \breve {{\boldsymbol u}}\|_{F(\Omega_2)}
 +\|\nabla \cdot \breve {\boldsymbol u}\|_{L^2(\Omega_2)}+ \| s\breve
{\boldsymbol u}\|_{L^2 (\Omega_2)^3} \lesssim \frac{1}{ |s|} \|\breve
{\boldsymbol j}\|_{L^2(\Omega_2)^3}
\end{align*}
and
\begin{align*}
 \frac{1}{|s|} \left( \|\nabla \breve {p}\|_{L^2 (\Omega_1)^3} +\|s \breve
p\|_{L^2 (\Omega_1)} \right)& \lesssim  \frac{1}{|s|} \left( \|\nabla \breve
{p}\|_{L^2 (\Omega_1)^3} +\|s \breve p\|_{L^2 (\Omega_1)} \right) +\| s \breve
{\boldsymbol u}\|_{L^2 (\Omega_2)^3 }\\
&\lesssim \frac{1}{ |s|} \|\breve {\boldsymbol j}\|_{L^2(\Omega_2)^3},
\end{align*}
which completes the proof.
\end{proof}

\subsection{Well-posedness in the time-domain}

We now consider the reduced problem in the time-domain:
\begin{subequations}\label{trp}
\begin{numcases}{}
\label{trp1} \Delta p  - \frac{1}{c^2 } \partial_t^2  p =0 &\quad  \text{in}~
$\Omega_1,\,t>0$ \\
\label{trp2} \mu \Delta \boldsymbol u +  (\lambda+\mu) \nabla \nabla \cdot
\boldsymbol u -\rho_2 \partial_t^2 \boldsymbol u =\boldsymbol j
 & \quad  \text {in}~  $\Omega_2 ,\,t>0$, \\
\label{trp3} p|_{t=0}=\partial_t p|_{t=0}=0,\quad \boldsymbol
u|_{t=0}=\partial_{t} \boldsymbol u|_{t=0}=0 &\quad  \text {in}~ $\Omega$,\\
 \label{trp4}\partial_{\boldsymbol n} p=-\rho_1 \boldsymbol n
\cdot \partial_t^2 \boldsymbol u,  \quad -p \boldsymbol n = \boldsymbol \sigma
(\boldsymbol u) \cdot \boldsymbol n &\quad \text{on}~ $\Gamma_f,\,t>0$,\\
\label{trp5} \partial_{\boldsymbol \nu} p= \mathscr T p &\quad
\text{on}~$\Gamma_h,\,t>0$,\\
 \label{trp6} \boldsymbol u =0 &\quad  \text {on}~ $\Gamma_g,\,t>0$.
\end{numcases}
\end{subequations}
To show the well-posedness of the reduced problem \eqref{trp}, we make the
following assumption for the source term $ \boldsymbol j:$
\begin{align}\label{ash}
 \boldsymbol j \in H^1 (0, T; L^2 (\Omega_2)^3), \quad \boldsymbol j
\big|_{t=0}=0.
\end{align}

\begin{theorem}
The initial-boundary value problem \eqref{trp} has a unique solution
$\left(p, \boldsymbol u\right)$ which satisfies
\begin{align*}
 & p (\boldsymbol x, t ) \in L^2 \left(0, T;~ H^1(\Omega_1) \right)\cap H^1
\left (0, T;~ L^2 (\Omega_1) \right),\\
 & \boldsymbol u (\boldsymbol x, t) \in L^2  \big(0, T;~ H^1_{\Gamma_g}
(\Omega_2)^3\big) \cap H^1 \big( 0, T;~ L^2 (\Omega_2)^3 \big)
\end{align*}
and the stability estimates
\begin{align}
&\max \limits_{t\in [0, T]}
 \left( \| \partial_t p\|_{L^2 (\Omega_1)} + \|\nabla p\|_{L^2 (\Omega_1)^3 }
\right)  \lesssim  \|\partial_t \boldsymbol j \|_{ L^1 (0, T;~ L^2
(\Omega_2)^3)}, \label{es1} \\
 &\max \limits_{ t \in [0, T]}
 \left(  \| \partial_t \boldsymbol u\|_{L^2 (\Omega_2)^3} +\| \nabla \cdot
\boldsymbol u\|_{L^2 (\Omega_2)}
 +\| \nabla  \boldsymbol u\|_{F (\Omega_2)} \right)  \lesssim  \|\partial_t
\boldsymbol j \|_{ L^1 (0, T; ~L^2 (\Omega_2)^3)}. \label{es2}
\end{align}
\end{theorem}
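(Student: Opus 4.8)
The plan is to split the argument into two parts: first establish existence, uniqueness and the claimed space--time regularity by transferring the frequency-domain result of Theorem \ref{wps} back to the time domain through the inverse Laplace transform, and then derive the quantitative bounds \eqref{es1}--\eqref{es2} by a direct energy argument applied to the time-domain variational formulation.

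For existence I would first observe that for every $s$ with $s_1={\rm Re}\,s>0$ Theorem \ref{wps} produces a unique pair $(\breve p,\breve{\boldsymbol u})$ that depends holomorphically on $s$, since the sesquilinear form \eqref{slf} and the right-hand side depend analytically on $s$ and Lax--Milgram solutions inherit this dependence. The estimates \eqref{sep}--\eqref{seu}, combined with the polynomial-in-$|s|$ bound on $\|\breve{\boldsymbol j}\|_{L^2(\Omega_2)^3}$ supplied by assumption \eqref{ash}, show that $(\breve p,\breve{\boldsymbol u})$ obeys a bound of the form $(1+|s|)^{m}$ uniformly for $s_1$ bounded below. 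Lemma \ref{A2} then guarantees that $\breve p$ and $\breve{\boldsymbol u}$ are the Laplace transforms of distributions $p,\boldsymbol u$ vanishing for $t<0$, and applying $\mathscr L^{-1}$ to the $s$-domain system recovers \eqref{trp}. The Parseval identity \eqref{PI} applied to \eqref{sep}--\eqref{seu} upgrades this to the stated regularity $p\in L^2(0,T;H^1(\Omega_1))\cap H^1(0,T;L^2(\Omega_1))$ and $\boldsymbol u\in L^2(0,T;H^1_{\Gamma_g}(\Omega_2)^3)\cap H^1(0,T;L^2(\Omega_2)^3)$.

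For the stability estimates I would work entirely in the time domain. Multiplying \eqref{trp1} by $\partial_t p$ and \eqref{trp2} by $\rho_1\partial_t\boldsymbol u$, integrating over $\Omega_1$ and $\Omega_2$ and integrating by parts, the interior terms assemble into $E'(t)$, the time derivative of the natural energy
\[
E(t)=\frac{1}{2}\Big[\frac{1}{c^2}\|\partial_t p\|_{L^2(\Omega_1)}^2+\|\nabla p\|_{L^2(\Omega_1)^3}^2+\rho_1\rho_2\|\partial_t\boldsymbol u\|_{L^2(\Omega_2)^3}^2+\rho_1\mu\|\nabla\boldsymbol u\|_{F(\Omega_2)}^2+\rho_1(\lambda+\mu)\|\nabla\cdot\boldsymbol u\|_{L^2(\Omega_2)}^2\Big].
\]
The boundary contribution on $\Gamma_h$ is $-\int_{\Gamma_h}(\mathscr T p)\,\partial_t p$, whose time integral is nonnegative by the time-domain counterpart of the positivity in Lemma \ref{tp} (transferred through the Laplace transform and \eqref{PI}), so it only helps. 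Integrating over $(0,t)$ and using the homogeneous initial data \eqref{trp3} yields an identity $E(t)+D_{\Gamma_h}(t)+I(t)=S(t)$, where $D_{\Gamma_h}(t)\ge0$ is the $\Gamma_h$ term, $S(t)=-\rho_1\int_0^t\int_{\Omega_2}\boldsymbol j\cdot\partial_\tau\boldsymbol u$, and $I(t)$ collects the two interface terms on $\Gamma_f$.

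The main obstacle is the interface integral $I(t)=\rho_1\int_0^t\int_{\Gamma_f}\big[p\,(\boldsymbol n\cdot\partial_\tau\boldsymbol u)-(\boldsymbol n\cdot\partial_\tau^2\boldsymbol u)\,\partial_\tau p\big]$: because the kinematic condition \eqref{trp4} carries $\partial_t^2\boldsymbol u$ while the dynamic one carries $p$, the two terms are mismatched in their order of time differentiation and do not cancel pointwise as a total derivative, unlike the purely imaginary interface term that dropped out of the real part in \eqref{RP}. I would resolve this by integrating the second term by parts in time and using the homogeneous initial conditions to discard the endpoint terms at $\tau=0$; the remaining boundary products at time $t$ and the residual space-time cross terms are then controlled by the trace estimates of Lemmas \ref{trf}--\ref{trg} together with Young's inequality, the trace norms being bounded by $\sqrt{E(t)}$ plus the time integral $\int_0^t\sqrt{E}$ (since $p(\cdot,t)=\int_0^t\partial_\tau p\,{\rm d}\tau$ and $\boldsymbol u(\cdot,t)=\int_0^t\partial_\tau\boldsymbol u\,{\rm d}\tau$). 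Finally, to reach the stated right-hand side I would integrate $S(t)$ by parts in time, using $\boldsymbol j|_{t=0}=0$ from \eqref{ash} to kill the boundary term and express $S(t)$ through $\partial_\tau\boldsymbol j$; bounding it by $\int_0^t\|\partial_\tau\boldsymbol j\|_{L^2(\Omega_2)^3}\,\|\boldsymbol u\|_{L^2(\Omega_2)^3}$ and again relating $\|\boldsymbol u\|$ to the energy, a Cauchy--Schwarz/Gronwall argument followed by taking the supremum over $t\in[0,T]$ delivers \eqref{es2}, and \eqref{es1} follows from the same energy identity because $\|\partial_t p\|_{L^2(\Omega_1)}$ and $\|\nabla p\|_{L^2(\Omega_1)^3}$ are already controlled by $E(t)$.
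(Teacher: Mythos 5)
Your first half (existence, uniqueness, and the space--time regularity via Theorem \ref{wps}, Lemma \ref{A2}, and the Parseval identity \eqref{PI}) is exactly the paper's argument and is fine. The gap is in the energy argument for \eqref{es1}--\eqref{es2}. You test the acoustic equation with $\partial_t p$ and the elastic equation with $\rho_1\partial_t\boldsymbol u$, and you correctly observe that the resulting interface integral
$\rho_1\int_0^t\int_{\Gamma_f}\big[(\boldsymbol n\cdot\partial_\tau^2\boldsymbol u)\,\partial_\tau\bar p - p\,(\boldsymbol n\cdot\partial_\tau\bar{\boldsymbol u})\big]$
does not cancel. But your proposed repair --- integrate by parts in time and absorb what remains via the trace lemmas and Young's inequality --- does not close. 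Integrating the first term by parts either produces $\int_0^t(\boldsymbol n\cdot\partial_\tau\boldsymbol u)\,\partial_\tau^2\bar p$ on $\Gamma_f$, where $\partial_t^2 p$ is not controlled by your energy $E$ at all, or leaves the endpoint term $\int_{\Gamma_f}(\boldsymbol n\cdot\partial_t\boldsymbol u(t))\,\partial_t\bar p(t)$, whose estimation via Lemma \ref{trf} requires the $H^{1/2}(\Gamma_f)$ trace of $\partial_t p(t)$, hence $\|\nabla\partial_t p(t)\|_{L^2(\Omega_1)^3}$ --- again not controlled by $E$. There is no small parameter with which Young's inequality can absorb these top-order boundary products into the left-hand side, and Gronwall does not help because they sit at time $t$ rather than under a time integral. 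This is not a technicality: uncontrolled interface coupling is precisely the obstruction that the choice of energy must be designed to remove.

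The paper's resolution, which your proposal is missing, is to differentiate the elastic equation and the interface/boundary conditions once in time (system \eqref{fdt}) and to pair the energy of $p$ with the energy of $\partial_t\boldsymbol u$, i.e.\ to use
$e_2(t)=\rho_1\rho_2\|\partial_t^2\boldsymbol u\|^2_{L^2(\Omega_2)^3}+\rho_1(\lambda+\mu)\|\nabla\cdot(\partial_t\boldsymbol u)\|^2_{L^2(\Omega_2)}+\rho_1\mu\|\nabla(\partial_t\boldsymbol u)\|^2_{F(\Omega_2)}$
alongside $e_1(t)=\|c^{-1}\partial_t p\|^2_{L^2(\Omega_1)}+\|\nabla p\|^2_{L^2(\Omega_1)^3}$. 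With the dynamic condition in the form $-\partial_t p\,\boldsymbol n=\boldsymbol\sigma(\partial_t\boldsymbol u)\cdot\boldsymbol n$, the two interface contributions become $+2{\rm Re}\,\rho_1\int_{\Gamma_f}(\boldsymbol n\cdot\partial_t^2\boldsymbol u)\,\partial_t\bar p$ and $-2{\rm Re}\,\rho_1\int_{\Gamma_f}\partial_t p\,(\boldsymbol n\cdot\partial_t^2\bar{\boldsymbol u})$, which cancel exactly upon taking real parts --- the same mechanism that made the $\Gamma_f$ term purely imaginary in \eqref{RP}. The $\Gamma_h$ term is nonpositive by Lemma \ref{tp} as you say, and the source term is handled as you propose. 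This yields bounds on $\partial_t^2\boldsymbol u$, $\nabla(\partial_t\boldsymbol u)$ and $\nabla\cdot(\partial_t\boldsymbol u)$; the quantities actually appearing in \eqref{es2} are then recovered by writing $\partial_t\boldsymbol u(\cdot,t)=\int_0^t\partial_\tau^2\boldsymbol u\,{\rm d}\tau$ and applying the epsilon inequality. You should restructure the second half of your argument around this time-differentiated energy.
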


\begin{proof}
For the air/fluid pressure $p$,  we have
\begin{align*}
\int_0^T &\left( \|\nabla p\|_{L^2(\Omega_1)^3}^2+\|\partial_t
p\|^2_{L^2(\Omega_1)}\right) {\rm d} t\\
&\leq  \int_0^T e^{- 2 s_1(t- T)}\left (  \|\nabla p\|_{L^2(\Omega_1)^3}^2+\|\partial_t p\|^2_{L^2(
\Omega_1)}\right) {\rm d} t\\
&= e^{2 s_1 T} \int_0^T e^{-2 s_1 t} \left ( \|\nabla p\|_{L^2(\Omega_1)^3}^2
+\|\partial_t p\|^2_{L^2(\Omega_1)}\right) {\rm d} t\\
&\lesssim  \int_0^{\infty} e^{-2 s_1 t}  \left ( \|\nabla
p\|_{L^2(\Omega_1)^2}^3+\|\partial_t p\|^2_{L^2(\Omega_1)}\right) {\rm d} t.
\end{align*}
Similarly, we have for the elastic displacement $\boldsymbol u$ that
\begin{align*}
\int_0^T &\left(\|\partial_t \boldsymbol u\|^2_{L^2 (\Omega_2)^3}
+\| \nabla \boldsymbol u\|^2_{F (\Omega_2)} +\| \nabla \cdot \boldsymbol
u\|^2_{L^2 (\Omega_2)} \right) {\rm d} t\\
&\leq  \int_0^T e^{-2 s_1 (t-T)} \left( \|\partial_t \boldsymbol u\|^2_{L^2
(\Omega_2)^3} +\| \nabla \boldsymbol u\|^2_{F (\Omega_2)} +\| \nabla \cdot
\boldsymbol u\|^2_{L^2 (\Omega_2)} \right) {\rm d} t\\
&= e^{2s_1 T} \int_0^T e^{-2 s_1 t} \left( \|\partial_t \boldsymbol u\|^2_{L^2
(\Omega_2)^3} +\| \nabla \boldsymbol e\|^2_{F (\Omega_2)} +\| \nabla \cdot
\boldsymbol u\|^2_{L^2 (\Omega_2)} \right) {\rm d} t\\
&\lesssim\int_0^\infty e^{-2 s_1 t} \left( \|\partial_t \boldsymbol u\|^2_{L^2
(\Omega_2)^3} +\| \nabla \boldsymbol u\|^2_{F (\Omega_2)} +\| \nabla \cdot
\boldsymbol u\|^2_{L^2 (\Omega_2)} \right) {\rm d} t.
\end{align*}
Hence it suffices to estimate the integrals
\[
\int_0^{\infty} e^{-2 s_1 t}  \big ( \|\nabla p\|_{L^2(\Omega_1)^3}^2
+\|\partial_t p\|^2_{L^2(\Omega_1)}\big) {\rm d} t
\]
and
\[
\int_0^\infty e^{-2 s_1 t} \left( \|\partial_t \boldsymbol u\|^2_{L^2
(\Omega_2)^3} +\| \nabla \boldsymbol u\|^2_{F (\Omega_2)} +\| \nabla \cdot
\boldsymbol u\|^2_{L^2 (\Omega_2)} \right) {\rm d} t.
\]

Taking the Laplace transform of \eqref{trp}, we obtain the reduced
acoustic-elastic interaction problem in the $s$-domain \eqref{saep}. It follows
from Theorem \ref{wps} that $\breve {p}$ and $\breve {\boldsymbol u}$ satisfy
the stability estimates \eqref{sep} and \eqref{seu}, respectively. It follows
from \cite[Lemma 44.1]{Treves1975} that $\breve p$ and $\breve{\boldsymbol u}$
are holomorphic functions of $s$ on the half plane $s_1 >\bar\gamma>0,$  where
$\bar \gamma$ is any positive constant. Hence we have from Lemma \ref{A2} that
the inverse Laplace transform of $\breve p$ and $\breve{\boldsymbol u}$ exist
and are supported in $[0, \infty].$

Using the Parseval identity \eqref{PI}, the assumptions \eqref{ash}, and the
stability estimate \eqref{sep}, we have
\begin{align*}
 \int_0^{\infty} e^{-2s_1 t}
 &\left(\|\nabla p\|^2_{L^2 (\Omega_1)^3}+\|\partial_t p
\|^2_{L^2(\Omega_1)}\right){\rm d}t
 =\frac{1}{2 \pi} \int_{-\infty}^{\infty}\left( \|\nabla \breve
p\|^2_{L^2(\Omega_1)^3}+ \|s \breve p\|^2_{L^2(\Omega_1)}\right) {\rm d} s_2\\
 \lesssim&  s_1 ^{-2} \int_{-\infty}^{\infty}  \| |s|\breve {\boldsymbol
j}\|^2_{L^2 (\Omega_2)^3}  {\rm d}s_2
  =s_1^{-2} \int_{-\infty}^{\infty} \|\mathscr L  ( \partial_t \boldsymbol
j)\|^2_{L^2 (\Omega_2)^3} {\rm d} s_2\\
\lesssim &s_1^{-2} \int_0^{\infty} e^{-2 s_1 t} \| \partial_t \boldsymbol
j\|^2_{L^2 (\Omega_2)^3} {\rm d} t,
 \end{align*}
which shows that
\begin{align*}
 p (\boldsymbol x, t) \in  L^2 \left(0, T; H^1(\Omega_1) \right)\cap H^1 \big(0,
T; L^2 (\Omega_1)\big).
\end{align*}
Since $\breve {\boldsymbol u} =\mathscr L (\boldsymbol u)= \mathscr F (e^{-s_1
t} \boldsymbol u)$, where $\mathscr F$ is the Fourier transform in $s_2$, we
have from the Parseval identity \eqref{PI} and the stability estimate
\eqref{seu} that
\begin{align*}
 \int_{0}^{\infty} & e^{-2 s_1 t} \left( \|\partial_t \boldsymbol u\|^2_{L^2
(\Omega_2)^3} +\| \nabla \boldsymbol u\|^2_{F (\Omega_2)} +\| \nabla \cdot
\boldsymbol u\|^2_{L^2 (\Omega_2)} \right) {\rm d} t \\
&=\frac{1}{2\pi} \int_{-\infty}^{\infty}\left( \|s \breve {\boldsymbol
u}\|^2_{L^2 (\Omega_2)^3}
+\|\nabla \breve {\boldsymbol u}\|^2_{F (\Omega_2)}+\|\nabla \cdot \breve
{\boldsymbol u}\|^2_{L^2 (\Omega_2)} \right) {\rm d} s_2\\
&\lesssim s_1^{-2} \int_{-\infty}^{\infty} \|\breve {\boldsymbol j}\|^2_{L^2
(\Omega_2)^3} {\rm d} s_2
= s_1^{-2} \int_{-\infty}^{\infty} \| \mathscr L (\boldsymbol j)\|^2_{L^2
(\Omega_2)^3} {\rm d} s_2\\
&\lesssim s_1^{-2} \int_0^{\infty} e^{-2 s_1 t} \|\boldsymbol j\|^2_{L^2
(\Omega_2)^3} {\rm d} t.
\end{align*}
It follows from \eqref{f1} that
\[
 \boldsymbol u  (\boldsymbol x, t)\in L^2  \big(0, T; H^1_{\Gamma_g}
(\Omega_2)^3\big) \cap H^1 \big( 0, T; L^2 (\Omega_2)^3 \big).
\]

Next we show the stability estimates. Let $\tilde p$ be the extension of $p$
with respect to $t$ in $\mathbb R$ such that $\breve p=0$ outside the interval
$[0, t].$  By the Parseval identity \eqref{PI} and Lemma \ref{tp}, we get
\begin{align*}
& {\rm Re} \int_{0}^{t} e^{-2 s_1 t} \langle \mathscr T  p,  {\partial_t \bar p}
\rangle_{\Gamma_h}{\rm d}t = {\rm Re} \int_0^t e^{-2 s_1 t} \int_{\Gamma_h}
(\mathscr T p)  {\partial_t \bar p} {\rm d} \boldsymbol r {\rm d}t\\
=& {\rm Re} \int_{\Gamma_h} \int_0^{\infty} e^{-2 s_1 t} (\mathscr T \tilde p)
{\partial_t \bar { \tilde p}} {\rm d} t{\rm d}  \boldsymbol r =\frac{1}{2 \pi}
\int_{-\infty}^{\infty} {\rm Re} \langle \mathscr{B} \breve {\tilde p},
~s \breve {\tilde p} \rangle_{\Gamma_h} {\rm d} s_2\\
 =&\frac{1}{2\pi} \int_{-\infty}^{\infty} |s|^2 {\rm Re} \langle s^{-1}
\mathscr{B} \breve {\tilde p}, ~\breve {\tilde p} \rangle_{\Gamma_h} {\rm d}
s_2 \leq 0,
\end{align*}
which yields after taking $s_1 \rightarrow 0$ that
\begin{align}\label{tp1}
 {\rm Re} \int_0^t  \int_{\Gamma_h} (\mathscr T p){\partial_t \bar p}{\rm
d}\boldsymbol r {d}t \leq 0.
\end{align}

Taking the partial derivative of \eqref{trp2}--\eqref{trp4} and \eqref{trp6}
with respect to $t$, we get
\begin{align}\label{fdt}
 \begin{cases}
  \mu \Delta  (\partial_t \boldsymbol u) + (\lambda +\mu ) \nabla \nabla \cdot
(\partial_t \boldsymbol u) -  \rho_2 \partial_t^2 (\partial_t \boldsymbol u) =
\partial_t\boldsymbol j \quad &  \text {in} ~\Omega_2,\,t >0,\\
  \partial_t \boldsymbol u|_{t=0}=0  \quad &\text{in} ~ \Omega_2,\\
  \partial_{t}^2 \boldsymbol u|_{t=0}= \rho_{2}^{-1 } \left(
  \mu \Delta \boldsymbol u + (\lambda+\mu) \nabla \nabla \cdot \boldsymbol u
-\boldsymbol j  \right)|_{t=0}=0 \quad & \text {in}~ \Omega_2,\\
  -\partial_t p\,\boldsymbol n = \partial_t (\boldsymbol \sigma (\boldsymbol u))
\cdot \boldsymbol n  =  \boldsymbol \sigma (\partial_t \boldsymbol u) \cdot
\boldsymbol n \quad &\text {on}~\Gamma_f,\,t>0,\\
  \partial_t \boldsymbol u =0 \quad & \text {on }~\Gamma_g,\,t>0.
 \end{cases}
\end{align}
For any  $0< t < T$, consider the energy  function
\begin{align*}
 \mathscr E (t)= e_1 (t) +e_2 (t),
\end{align*}
where
\[
  e_1 (t)= \| \frac{1}{c} \partial_t p\|^2_{ L^2 (\Omega_1)} +\| \nabla
p\|^2_{L^2 (\Omega_1)^3}
\]
and
\begin{align*}
 e_2 (t)= \|(\rho_1 \rho_2)^{1/2} \partial_t ^2  \boldsymbol u\|^2_{L^2
(\Omega_2)^3} &+\| (\rho_1 (\lambda + \mu))^{1/2} \nabla \cdot (\partial_t
\boldsymbol u)\|^2_{L^2 (\Omega_2)}\\
&+\|(\rho_1\mu)^{1/2}\nabla (\partial_t \boldsymbol u)\|^2_{F (\Omega_2)}.
\end{align*}
It is easy to note that
\begin{align} \label{e00}
 \mathscr E (t) -\mathscr E (0)
 =\int_0^t \mathscr E ' (\tau) {\rm d} \tau = \int_0^t \left ( e_1' (\tau)
+e_2'(\tau) \right) {\rm d} \tau.
 \end{align}
It follows from \eqref{trp1}, \eqref{trp3}--\eqref{trp5} and the integration
by parts that
\begin{align}
\int_0^t e_1' (\tau) {\rm d} \tau
=& 2 {\rm Re } \int_0^t \int_{\Omega_1} \left (\frac{1}{c^2} \partial_t^2 p  ~
\partial_t \bar p + \partial_t  (\nabla p )\cdot  \nabla \bar p \right) {\rm d}
\boldsymbol  x  {\rm d}\tau \nonumber\\
=& 2 {\rm Re} \int_0^t \int_{\Omega_1} \left ( \Delta p  \partial_t \bar p +
\partial_t  (\nabla p )\cdot  \nabla \bar p \right) {\rm d} \boldsymbol x  {\rm
d}\tau \nonumber\\
=& \int_0^{t} \int_{\Omega_1 } 2 {\rm Re} \left (- \nabla p \cdot \partial_t
(\nabla \bar {p}) + \partial_t (\nabla p) \cdot \nabla \bar p \right) {\rm d}
\boldsymbol  x {\rm d} \tau \nonumber \\
&\quad+ 2 {\rm Re}\int_0^t \int_{\Gamma_h} (\mathscr T p)  \partial_t \bar p
{\rm d} \boldsymbol r {\rm d} \tau
- 2 {\rm Re} \int_0^t \int_{\Gamma_f} \partial_{\boldsymbol n} p
\partial_t \bar p  {\rm d}\gamma {\rm d} \tau\nonumber\\
=&   2 {\rm Re}\int_0^t \int_{\Gamma_h} (\mathscr T p)  \partial_t \bar p {\rm
d} \boldsymbol r{\rm d} \tau + 2 {\rm Re} \int_0^t \int_{\Gamma_f}
\rho_1 \boldsymbol n\cdot \partial_t^2 \boldsymbol u \partial_t  \bar p
 {\rm d} \gamma {\rm d}\tau. \label{e11}
\end{align}
Similarly, we have from \eqref{fdt} and the integration by parts that
\begin{align}
 \int_0^{t} e'_{2} (\tau ) {\rm d} \tau
 =&  \rho_1 2 {\rm Re} \int_0^t \int_{\Omega_2} \big(
  \rho_2 \partial_t (\partial_t^2 \boldsymbol u ) \cdot  \partial_t^2 \bar
{\boldsymbol u}
 + (\lambda +\mu) \nabla \cdot (\partial_t^2 \boldsymbol u) \nabla \cdot
(\partial_t  \bar {\boldsymbol u})\notag\\
&+ \mu  \nabla (\partial_t^2 \boldsymbol u) :\nabla (\partial_t \bar
{\boldsymbol u}) \big){\rm d} \boldsymbol x {\rm d} \tau \nonumber\\
 =& {\rho_1} 2 {\rm Re} \int_0^ t\int_{\Omega_2} \big(
 \left( \mu \Delta  (\partial_t \boldsymbol u) + (\lambda +\mu ) \nabla \nabla
\cdot (\partial_t \boldsymbol u)
 -\partial_t \boldsymbol j \right)  \cdot  \partial_t^2 \bar {\boldsymbol u}
\nonumber\\
 &\quad+(\lambda +\mu) \nabla \cdot (\partial_t^2 \boldsymbol u) \nabla \cdot
(\partial_t  \bar {\boldsymbol u}) + \mu  \nabla (\partial_t^2 \boldsymbol u) :
\nabla (\partial_t \bar {\boldsymbol u})
 \big){\rm d}\boldsymbol x {\rm d} \tau \nonumber \\
 =& \rho_1 \int_0^t \int_{\Omega_2 }{\rm Re} \big( - \mu  \nabla (\partial_t
\boldsymbol u) : \nabla (\partial_t^2  \bar {\boldsymbol u}) -
 (\lambda+\mu) \nabla \cdot (\partial_t \boldsymbol u) \nabla \cdot
(\partial_t^2  \bar {\boldsymbol u})\nonumber\\
 &\quad+ (\lambda +\mu) \nabla \cdot (\partial_t^2 \boldsymbol u) \nabla \cdot
(\partial_t  \bar {\boldsymbol u})
 + \mu  \nabla (\partial_t^2 \boldsymbol u) : \nabla (\partial_t \bar
{\boldsymbol u})\big) {\rm d} \boldsymbol x  {\rm d} \tau \nonumber\\
 &\quad- 2 {\rm Re} \rho_1\int_0^t \int_{\Omega_2} \partial_t \boldsymbol j
\cdot \partial_t^2 \bar {\boldsymbol u} {\rm d} \boldsymbol x  {\rm d} \tau
 + 2 {\rm Re} {\rho_1 }\int_0^t \int_{\Gamma_f} (\boldsymbol \sigma (\partial_t
\boldsymbol u) \cdot \boldsymbol n) \cdot \partial_t^2 \bar {\boldsymbol u} {\rm
d} \gamma {\rm d} \tau  \nonumber  \\
 =& - 2 {\rm Re} \rho_1\int_0^t \int_{\Omega_2} \partial_t \boldsymbol j  \cdot
\partial_t^2 \bar {\boldsymbol u} {\rm d} \boldsymbol x  {\rm d} \tau -
  2 {\rm Re }\rho_1 \int_0^t \int_{\Gamma_f} \partial_t p \boldsymbol n \cdot
\partial_t^2  \bar {\boldsymbol u} {\rm d} \gamma {\rm d} \tau. \label{e22}
\end{align}
Since $\mathscr E (0)=0,$ combining \eqref{e00}-- \eqref{e22} and \eqref{tp1} gives
\begin{align*}
\mathscr E (t)
&= 2 {\rm Re} \int_0^t \int_{\Gamma_h} (\mathscr T p) \partial_t  \bar p {\rm d}
\boldsymbol r  {\rm d} \tau  -2 {\rm Re} \rho_1 \int_0^t \int_{\Omega_2}
\partial_t \boldsymbol j \cdot
\partial_t^2 \bar {\boldsymbol u} {\rm d} \boldsymbol  x  {\rm d} \tau\\
& \leq   - 2 {\rm Re} \rho_1\int_0^t \int_{\Omega_2} \partial_t \boldsymbol j
\cdot\partial_t^2 \bar {\boldsymbol u} {\rm d} \boldsymbol  x  {\rm d} \tau \\
& \leq 2 \rho_1 \max \limits_{t \in [0, T]} \|\partial_t^2  {\boldsymbol
u}\|_{L^2 (\Omega_2)^3} \|\partial_t \boldsymbol j \|_{ L^1 (0, T; L^2
(\Omega_2)^3)}.
\end{align*}
Thus, we can obtain the estimate for the air/fluid pressure $p$:
\begin{align*}
 \max \limits_{t\in[0, T]}&
 \left( \| \partial_t p\|^2_{L^2 (\Omega_1)} + \|\nabla p\|^2_{L^2 (\Omega_1)^3
}  \right)\\
&\leq \max \limits_{t\in[0, T]} \left( \| \partial_t p\|^2_{L^2 (\Omega_1)} +
\|\nabla p\|^2_{L^2 (\Omega_1)^3 } + \|\partial_t^2 \boldsymbol u \|^2_{L^2
(\Omega_2)^3} \right)\\
&\lesssim \max \limits_{t\in[0, T]} \mathscr E (t)
 \lesssim  \max \limits_{t \in [0, T]} \|\partial_t^2  {\boldsymbol u}\|_{L^2
(\Omega_2)^3} \|\partial_t \boldsymbol j \|_{L^1 (0, T; L^2 (\Omega_2)^3)}.
\end{align*}
It follows from Young's inequality that
\begin{align*}
 \max \limits_{t\in[0, T]}
 \left( \| \partial_t p\|_{L^2 (\Omega_1)} + \|\nabla p\|_{L^2 (\Omega_1)^3 }
\right)  \lesssim  \|\partial_t \boldsymbol j \|_{ L^1 (0, T; L^2(\Omega_2)^3)},
\end{align*}
which shows the stability estimate \eqref{es1}.

For the elastic displacement $\boldsymbol u$, we can also obtain
\begin{align*}
 \max \limits_{ t \in [0, T]}
 &\left(  \| \partial_t^2 \boldsymbol u\|^2_{L^2 (\Omega_2)^3} +\| \nabla \cdot
(\partial_t \boldsymbol u)\|^2_{L^2 (\Omega_2)}
 +\| \nabla (\partial_t \boldsymbol u)\|^2_{F (\Omega_2)} \right)\\
& \lesssim \max
\limits_{[0, T]} \mathscr E (t)\lesssim  \max \limits_{t \in [0, T]}
\|\partial_t^2  {\boldsymbol u}\|_{L^2
(\Omega_2)^3} \|\partial_t \boldsymbol j \|_{ L^1 (0, T; L^2 (\Omega_2)^3)}.
\end{align*}
It follows from the Cauchy--Schwarz inequality that
\begin{align}\label{EP1}
 \max \limits_{ t \in [0, T]}
 \left(  \| \partial_t^2 \boldsymbol u\|^2_{L^2 (\Omega_2)^3} +\| \nabla \cdot
(\partial_t \boldsymbol u)\|^2_{L^2 (\Omega_2)}
 +\| \nabla (\partial_t \boldsymbol u)\|^2_{F (\Omega_2)} \right)\notag\\
 \lesssim\|\partial_t \boldsymbol j\|^2_{L^1 (0, T; L^2 (\Omega_2)^3)}.
\end{align}
For any $0 <t \leq T$, using the epsilon inequality leads to
\begin{align*}
 \|\partial_t \boldsymbol u\|^2_{L^2 (\Omega_2)^3} = \int_0^t \partial_\tau
\|\partial_\tau \boldsymbol u (\cdot, \tau)\|^2_{L^2 (\Omega_2)^3}  {\rm d} \tau
  \leq \epsilon T  \|\partial_t \boldsymbol u\|^2_{L^2 (\Omega_2)^3} +
\frac{T}{\epsilon} \| \partial_t^2 \boldsymbol u\|^2_{L^2 (\Omega_2)^3}.
\end{align*}
Here we choose $\epsilon>0 $ small enough  such that  $ \epsilon T <1$,
e.g., $\epsilon =\frac{1}{2T}$. Hence we have
\begin{align}\label{ep1}
 \|\partial_t \boldsymbol u\|^2_{L^2 (\Omega_2)^3} \lesssim \| \partial_t^2
\boldsymbol u\|^2_{L^2 (\Omega_2)^3}.
\end{align}
Similarly, we can  obtain
\begin{align}\label{ep2}
 \|\nabla \cdot \boldsymbol u\|^2_{L^2 (\Omega_2)} \lesssim  \| \nabla \cdot
(\partial_t \boldsymbol u)\|^2_{L^2 (\Omega_2)}, \quad
 \|\nabla \boldsymbol u\|^2_{F(\Omega_2)} \lesssim \| \nabla (\partial_t
\boldsymbol u)\|^2_{F (\Omega_2)}.
\end{align}
Combining \eqref{EP1}--\eqref{ep2} gives
\begin{align*}
 \max \limits_{t\in [0, T]} \left( \|\partial_t \boldsymbol u\|^2_{L^2
(\Omega_2)^3} +\|\nabla \cdot \boldsymbol u\|^2_{L^2 (\Omega_2)} +  \|\nabla
\boldsymbol u\|^2_{F(\Omega_2)}\right) 
\lesssim\|\partial_t\boldsymbol j\|^2_{L^1(0,T;L^2(\Omega_2)^3)},
\end{align*}
which shows the estimate \eqref{es2}.
\end{proof}

\subsection{A priori estimates}

In what follows, we derive a priori stability estimates for the  air/fluid
pressure $p$ and the displacement $\boldsymbol u$ with a minimum regularity
requirement for the data and an explicit dependence on the time.

We shall consider the elastic wave equation for  $\partial_t \boldsymbol u$ in
order to match the interface conditions when deducing the stability estimates.
Taking the partial derivative  of \eqref{trp2}--\eqref{trp5} and \eqref{trp6}
with respect to $t$, we obtain a new reduced problem:
\begin{align}\label{TRP}
 \begin{cases}
 \Delta p  - \frac{1}{c^2 } \partial_t^2  p =0 \quad
&\text{in}~\Omega_1,\,t>0\\
  \partial_{\boldsymbol \nu} p= \mathscr T p \quad  &\text
{on}~\Gamma_h,\,t>0,\\
 \partial_{\boldsymbol n} p=-\rho_1 \boldsymbol n
\cdot\partial_t^2 \boldsymbol u \quad & \text {on}~\Gamma_f,\,t>0,\\
  p|_{t=0}=\partial_t p|_{t=0}=0\quad & \text {in}~\Omega_1 \\
  \mu \Delta (\partial_t \boldsymbol u) +  (\lambda+\mu) \nabla \nabla \cdot
(\partial_t\boldsymbol u) -\rho_2  \partial_t^2 (\partial_t \boldsymbol u) =
\partial_t\boldsymbol j \quad  & \text {in}~  \Omega_2,\,t>0,\\
  \partial_t \boldsymbol u|_{t=0}=0\quad & \text {in}~\Omega_2,\\
 \partial_{t}^2 \boldsymbol u|_{t=0}= \rho_2^{-1 } \left(
  \mu \Delta \boldsymbol u + (\lambda+\mu) \nabla \nabla \cdot \boldsymbol u
-\boldsymbol j  \right)|_{t=0}=0 \quad & \text {in}~ \Omega_2,\\
   -\partial_t p \boldsymbol n = \partial_t (\boldsymbol \sigma (\boldsymbol
u)) \cdot \boldsymbol n  = \boldsymbol \sigma (\partial_t \boldsymbol u) \cdot
\boldsymbol n \quad &\text {on}~\Gamma_f,\,t>0,\\
  \partial_t \boldsymbol u =0 \quad & \text {on }~\Gamma_g,\,t>0.
 \end{cases}
 \end{align}
The variational problems of \eqref{TRP} is to find $(p, \boldsymbol u) \in
H^1(\Omega_1) \times   \in H^1_{\Gamma_g} (\Omega_2)^3$ for all $t>0$ such that
\begin{align}
 \int_{\Omega_1} \frac{1}{c^2}
 \partial_t^2 p \bar q {\rm d} \boldsymbol  x
 =&-\int_{\Omega_1} \nabla p \cdot \nabla \bar q {\rm d} \boldsymbol  x
+\int_{\Gamma_h} (\mathscr T p) \bar q {\rm d} \boldsymbol r
 -\int_{\Gamma_f} \partial_{\boldsymbol n} p\, \bar q {\rm d}
\gamma \nonumber\\
 =&-\int_{\Omega_1} \nabla p \cdot \nabla \bar q {\rm d} \boldsymbol  x
 +\int_{\Gamma_h} (\mathscr T p) \bar q {\rm d} \boldsymbol r +\int_{\Gamma_f}
\rho_1 (\boldsymbol n \cdot \partial_t^2 \boldsymbol u) \bar {q} {\rm d}\gamma,
\quad \forall q \in H^1 (\Omega_1) \label{v1}
\end{align}
and
\begin{align}
  \int_{\Omega_2}\rho_2 \partial_t^2  (\partial_t \boldsymbol u) \cdot \bar
{\boldsymbol v} {\rm d} \boldsymbol x
 =& -\int_{\Omega_2} \left(\mu \nabla { (\partial_t \boldsymbol u)} : \nabla
{\bar{ \boldsymbol v}} + (\lambda+\mu) (\nabla \cdot (\partial_t \boldsymbol u))
(\nabla \cdot \bar {\boldsymbol v}) \right) {\rm d} \boldsymbol x \nonumber\\
 &\quad-\int_{\Omega_2} \partial_t \boldsymbol j \cdot \bar {\boldsymbol v} {\rm
d} \boldsymbol x + \int_{\Gamma_f}\big(\boldsymbol \sigma (
\partial_t\boldsymbol
u) \cdot\boldsymbol n\big) \cdot \bar {\boldsymbol v} {\rm d} \gamma \nonumber\\
 =&-\int_{\Omega_2} \left(\mu \nabla { (\partial_t\boldsymbol u)} : \nabla
{\bar{ \boldsymbol v}} + (\lambda+\mu) (\nabla \cdot (\partial_t \boldsymbol u))
(\nabla \cdot \bar {\boldsymbol v})
 + (\partial_t\boldsymbol j) \cdot \bar {\boldsymbol v} \right) {\rm d}
\boldsymbol  x \nonumber\\
 &\quad-\int_{\Gamma_f} (\partial_t p)( \boldsymbol n \cdot \bar {\boldsymbol
v}) {\rm d} \gamma, \quad \forall \boldsymbol v\in H^1_{\Gamma_g}(\Omega_2)^3.
\label{v2}
\end{align}

To show the stability of the solution, we follow the argument in
\cite{Treves1975} but with a careful study of the TBC. The following lemma is
useful for the subsequent analysis.

\begin{lemma}\label{ttp}
 Given $\xi \geq 0$ and $p \in H^1 (\Omega_1),$ we have
 \begin{align*}
  {\rm Re} \int_{\Gamma_h} \int_0^{\xi} \left( \int_0^{t} \mathscr T p (\cdot,
\tau) {\rm d} \tau \right) \bar p (\cdot, t) {\rm d} t {\rm d} \boldsymbol r
\leq 0.
 \end{align*}
\end{lemma}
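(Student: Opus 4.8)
The plan is to reduce the claim to the spectral positivity of the operator $s^{-1}\mathscr{B}$ established in Lemma \ref{tp}, exploiting the fact that the time integration $\int_0^t(\cdot)\,{\rm d}\tau$ is precisely the operation whose Laplace symbol is $s^{-1}$. Writing $\psi(\cdot,t)=\int_0^t\mathscr T p(\cdot,\tau)\,{\rm d}\tau$, the assertion is equivalent to ${\rm Re}\int_0^\xi\langle\psi(\cdot,t),p(\cdot,t)\rangle_{\Gamma_h}\,{\rm d}t\le 0$, so I would work with this bracket throughout.

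First I would localize in time. Let $\tilde p$ denote the zero extension of $p$ off the interval $[0,\xi]$, i.e. $\tilde p(\cdot,t)=p(\cdot,t)$ for $t\in[0,\xi]$ and $\tilde p(\cdot,t)=0$ otherwise, and set $\tilde\psi(\cdot,t)=\int_0^t\mathscr T\tilde p(\cdot,\tau)\,{\rm d}\tau$. Because $\mathscr T=\mathscr L^{-1}\circ\mathscr B\circ\mathscr L$ is a causal convolution in $t$, for every $\tau\le\xi$ the value $\mathscr T\tilde p(\cdot,\tau)$ depends only on $\tilde p$ restricted to $[0,\tau]$, where $\tilde p$ coincides with $p$; hence $\tilde\psi=\psi$ on $[0,\xi]$. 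Since $\tilde p$ vanishes for $t>\xi$, the bracket integrand vanishes there as well, and therefore ${\rm Re}\int_0^\xi\langle\psi,p\rangle_{\Gamma_h}\,{\rm d}t={\rm Re}\int_0^\infty\langle\tilde\psi,\tilde p\rangle_{\Gamma_h}\,{\rm d}t$.

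Next I would insert an exponential weight and pass to the frequency side. For any $s_1>0$, the Parseval identity \eqref{PI} gives
\begin{align*}
{\rm Re}\int_0^\infty e^{-2s_1 t}\langle\tilde\psi(\cdot,t),\tilde p(\cdot,t)\rangle_{\Gamma_h}\,{\rm d}t
=\frac{1}{2\pi}\int_{-\infty}^\infty{\rm Re}\,\langle\breve{\tilde\psi}(s),\breve{\tilde p}(s)\rangle_{\Gamma_h}\,{\rm d}s_2,
\end{align*}
where, using $\mathscr L(\mathscr T\tilde p)=\mathscr B\breve{\tilde p}$ together with $\mathscr L\big(\int_0^t(\cdot)\,{\rm d}\tau\big)=s^{-1}(\cdot)$, the transform of $\tilde\psi$ is $\breve{\tilde\psi}(s)=s^{-1}\mathscr B\breve{\tilde p}(s)$. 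The integrand on the right is thus exactly ${\rm Re}\,\langle s^{-1}\mathscr B\breve{\tilde p},\breve{\tilde p}\rangle_{\Gamma_h}$, which is $\le 0$ for every $s$ with $s_1>0$ by Lemma \ref{tp}. Consequently the left-hand side is $\le 0$ for each $s_1>0$.

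Finally I would remove the weight. Since $\tilde p$, and hence $\tilde\psi$, are supported in the bounded interval $[0,\xi]$ and square integrable in time with values in $L^2(\Gamma_h)$, the map $t\mapsto\langle\tilde\psi,\tilde p\rangle_{\Gamma_h}$ is integrable and the factor $e^{-2s_1 t}$ is uniformly bounded by $1$, so dominated convergence lets me send $s_1\to0^+$ and conclude ${\rm Re}\int_0^\infty\langle\tilde\psi,\tilde p\rangle_{\Gamma_h}\,{\rm d}t\le0$, which is the desired inequality. The main obstacle is not the algebra but the bookkeeping at the two interfaces between time and frequency: verifying the causality claim that legitimizes the zero extension $\tilde p$, and checking that the conjugation built into $\langle\cdot,\cdot\rangle_{\Gamma_h}$ is compatible with the Plancherel pairing \eqref{PI} so that the right-hand integrand really is the quadratic form controlled by Lemma \ref{tp}. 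Once those are in place, the crucial sign is supplied verbatim by Lemma \ref{tp}, the $s^{-1}$ in that lemma being matched exactly by the temporal antiderivative in the statement.
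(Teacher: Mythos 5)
Your proposal is correct and follows essentially the same route as the paper: zero-extend $p$ off $[0,\xi]$, identify the temporal antiderivative with the symbol $s^{-1}$ so that the weighted bracket becomes $\frac{1}{2\pi}\int_{-\infty}^{\infty}{\rm Re}\,\langle s^{-1}\mathscr B\breve{\tilde p},\breve{\tilde p}\rangle_{\Gamma_h}\,{\rm d}s_2\le 0$ via the Parseval identity and Lemma \ref{tp}, then let $s_1\to 0^+$. Your explicit causality argument justifying $\tilde\psi=\psi$ on $[0,\xi]$ is a detail the paper leaves implicit, but it is the same proof.
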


\begin{proof}
Let $\tilde p$ be the extension of $p$ with respect to  $t$ in $\mathbb R$ such
that $\tilde p =0$ outside the interval $[0, \xi].$ We obtain from the
Parseval identity \eqref{PI} and Lemma \ref{tp} that
\begin{align*}
  {\rm Re} \int_{\Gamma_h} &\int_0^{\xi}
  e^{-2 s_1 t} \left( \int_0^{\tau} \mathscr T p (\cdot, \tau) {\rm d} \tau
\right) \bar  p (\cdot, t) {\rm d} t {\rm d} \boldsymbol r\\
&  = {\rm Re} \int_{\Gamma_h} \int_0^{\infty} e^{- 2 s_1 t} \left( \int_0^{t}
\mathscr T  \tilde p (\cdot, \tau) {\rm d} \tau \right) \bar {\tilde p} (\cdot,
t) {\rm d} t {\rm d} \boldsymbol r\\
  &= {\rm Re} \int_{\Gamma_h} \int_0^{\infty} e^{-2 s_1 t} \left( \int_0^t
\mathscr L^{-1} \circ \mathscr{B} \circ \mathscr L
  \tilde p (\cdot, \tau) {\rm d} \tau \right) \bar {\tilde  p} (\cdot, t) {\rm
d} t{\rm d} \boldsymbol r\\
  &={\rm Re} \int_{\Gamma_h} \int_0^{\infty}  e^{- 2 s_1 t} \left( \mathscr
L^{-1} \circ (s^{-1} \mathscr{B})
  \circ \mathscr L  \tilde p (\cdot, t) ~\bar {\tilde p} (\cdot, t)   \right)
{\rm d} t {\rm d} \boldsymbol r\\
  &=\frac{1}{2 \pi} \int_{-\infty}^{\infty} {\rm Re} \int_{\Gamma_h} s^{-1}
\mathscr{B}\breve {\tilde p} (\cdot, s) \bar {\breve {\tilde p}} (\cdot, s) {\rm
d} \boldsymbol r {\rm d} s_2\\
  &=\frac{1}{2\pi} \int_{-\infty}^{\infty} {\rm Re} \langle s^{-1} \mathscr{B}
\breve{\tilde p}, \breve {\tilde p} \rangle_{\Gamma_h} {\rm d} s_2 \leq 0,
\end{align*}
where we have used the fact that
\[
 \int_0^{t} p (\cdot, \tau){\rm d} \tau = \mathscr L^{-1} \left( s^{-1} \breve
{p} (\cdot, s) \right).
\]
The proof is completed after taking the limit $s_1 \to 0$.
\end{proof}

\begin{theorem}
Let $ (p, \boldsymbol u) \in H^1 (\Omega_1) \times H^1_{\Gamma_g} (\Omega_2)^3$
be the solution of \eqref{v1}--\eqref{v2}.  Given $ \partial_t \boldsymbol j
\in L^1 \left( 0, T; ~L^2 (\Omega_2)^3 \right),$  for any $T>0,$ we have
\begin{align}
  \|p\|_{L^{\infty}\left(0, T;~ L^2(\Omega_1) \right)}
 &\lesssim T  \|\partial_t \boldsymbol j\|_{L^1(0, T; ~L^2 (\Omega_2)^3)},
\label{ess1}\\
 \|\boldsymbol u\|_{L^{\infty} (0, T; L^2 (\Omega_2)^3)}
 &\lesssim  T^2  \|\partial_t \boldsymbol j\|_{L^1(0, T; ~L^2 (\Omega_2)^3)}
\label{ess2}\\
 \|p\|_{L^{2}\left(0, T;~ L^2(\Omega_1) \right)}
 &\lesssim T^{3/2}  \|\partial_t \boldsymbol j\|_{L^1(0, T; ~L^2 (\Omega_2)^3)},
\label{ess}\\
\|\boldsymbol u\|_{L^2 (0, T; L^2 (\Omega_2)^3)}&\lesssim   
T^{5/2}\|\partial_t \boldsymbol j\|_{L^1(0, T; ~L^2(\Omega_2)^3)}. \label{ess3}
\end{align}
\end{theorem}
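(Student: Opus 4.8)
The plan is to combine \eqref{v1} and \eqref{v2} into a single coupled energy identity in which the transparent-boundary term on $\Gamma_h$ has a favorable sign by Lemma \ref{ttp} and the two interface integrals on $\Gamma_f$ cancel exactly, so that everything reduces to absorbing the elastic source. Throughout I use the homogeneous initial data in \eqref{trp3} and \eqref{TRP}, together with the representations $p(\cdot,t)=\int_0^t\partial_\tau p\,{\rm d}\tau$, $\boldsymbol u(\cdot,t)=\int_0^t\partial_\tau\boldsymbol u\,{\rm d}\tau$, and the bound $\|\int_0^t\partial_\tau\boldsymbol j\,{\rm d}\tau\|_{L^2(\Omega_2)}\le\|\partial_t\boldsymbol j\|_{L^1(0,T;L^2(\Omega_2)^3)}$.

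First I would integrate \eqref{v1} in time over $(0,t)$. Using $\partial_t p|_{t=0}=0$ and $\partial_t\boldsymbol u|_{t=0}=0$, this replaces $\partial_t^2 p$ by $\partial_t p$, replaces $\nabla p$ by $\nabla P$ with $P:=\int_0^t p\,{\rm d}\tau$, turns the transparent-boundary term into $\int_0^t\mathscr T p\,{\rm d}\tau$, and crucially reduces the $\Gamma_f$ term to first order, namely $\rho_1\int_{\Gamma_f}(\boldsymbol n\cdot\partial_t\boldsymbol u)\bar q\,{\rm d}\gamma$. Testing the resulting identity with $q=p(\cdot,t)$, taking the real part, and integrating over $t\in(0,\xi)$ puts $\tfrac1{2c^2}\|p(\cdot,\xi)\|_{L^2(\Omega_1)}^2+\tfrac12\|\nabla P(\cdot,\xi)\|_{L^2(\Omega_1)^3}^2$ on the left; the $\Gamma_h$ term is precisely the quantity that Lemma \ref{ttp} shows to be nonpositive, so it may be dropped, leaving the interface contribution $\rho_1\,{\rm Re}\int_0^\xi\int_{\Gamma_f}(\boldsymbol n\cdot\partial_t\boldsymbol u)\bar p\,{\rm d}\gamma\,{\rm d}t$.

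In parallel, integrating \eqref{v2} in time recovers the variational identity for $\boldsymbol u$ itself, with source $\int_0^t\partial_\tau\boldsymbol j\,{\rm d}\tau$ and interface term $-\int_{\Gamma_f}p(\boldsymbol n\cdot\bar{\boldsymbol v})\,{\rm d}\gamma$. I would test this with $\boldsymbol v=\partial_t\boldsymbol u$, multiply by $\rho_1$, take the real part, and integrate over $(0,\xi)$, which gives $\tfrac{\rho_1}2\big(\rho_2\|\partial_t\boldsymbol u(\cdot,\xi)\|_{L^2(\Omega_2)^3}^2+\mu\|\nabla\boldsymbol u(\cdot,\xi)\|_{F(\Omega_2)}^2+(\lambda+\mu)\|\nabla\cdot\boldsymbol u(\cdot,\xi)\|_{L^2(\Omega_2)}^2\big)$ on the left, a source term, and the interface term $-\rho_1\,{\rm Re}\int_0^\xi\int_{\Gamma_f}p(\boldsymbol n\cdot\partial_t\bar{\boldsymbol u})\,{\rm d}\gamma\,{\rm d}t$. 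Adding the two identities, the key observation is that ${\rm Re}[p(\boldsymbol n\cdot\partial_t\bar{\boldsymbol u})]={\rm Re}[(\boldsymbol n\cdot\partial_t\boldsymbol u)\bar p]$, so the two $\Gamma_f$ contributions are complex conjugates and cancel exactly, leaving only the elastic source $-\rho_1\,{\rm Re}\int_0^\xi\int_{\Omega_2}(\int_0^t\partial_\tau\boldsymbol j\,{\rm d}\tau)\cdot\partial_t\bar{\boldsymbol u}\,{\rm d}\boldsymbol x\,{\rm d}t$ on the right.

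To close, I bound the source by $\rho_1\|\partial_t\boldsymbol j\|_{L^1(0,T;L^2)}\int_0^\xi\|\partial_t\boldsymbol u(\cdot,t)\|_{L^2}\,{\rm d}t\le\rho_1 T\|\partial_t\boldsymbol j\|_{L^1}M$, where $M:=\max_{[0,T]}\|\partial_t\boldsymbol u\|_{L^2(\Omega_2)^3}$. Since the left-hand energy dominates $\tfrac{\rho_1\rho_2}2\|\partial_t\boldsymbol u(\cdot,\xi)\|^2$, taking the maximum over $\xi$ and cancelling one power of $M$ yields $M\lesssim T\|\partial_t\boldsymbol j\|_{L^1}$; feeding this back gives $\max_\xi\big(\|p(\cdot,\xi)\|_{L^2}+\|\partial_t\boldsymbol u(\cdot,\xi)\|_{L^2}\big)\lesssim T\|\partial_t\boldsymbol j\|_{L^1}$, which is \eqref{ess1}. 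The remaining estimates follow by elementary time-integration: $\|\boldsymbol u(\cdot,t)\|_{L^2}\le\int_0^t\|\partial_\tau\boldsymbol u\|\,{\rm d}\tau\lesssim T^2\|\partial_t\boldsymbol j\|_{L^1}$ gives \eqref{ess2}, and then $\|p\|_{L^2(0,T;L^2)}\le T^{1/2}\|p\|_{L^\infty(0,T;L^2)}$ and $\|\boldsymbol u\|_{L^2(0,T;L^2)}\le T^{1/2}\|\boldsymbol u\|_{L^\infty(0,T;L^2)}$ give \eqref{ess} and \eqref{ess3}. The main obstacle is the setup in the second and third paragraphs: one must integrate \eqref{v1} in time \emph{before} testing, both so that the $\Gamma_h$ term matches the exact form appearing in Lemma \ref{ttp} and so that the $\Gamma_f$ term becomes first order in $\partial_t\boldsymbol u$, while simultaneously testing the $\boldsymbol u$-identity with $\partial_t\boldsymbol u$; only this particular pairing renders the two interface integrals conjugate, which is what makes them cancel upon taking real parts.
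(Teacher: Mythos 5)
Your proposal is correct and is essentially the paper's own argument in transposed form: where the paper tests \eqref{v1} and \eqref{v2} with the backward integrals $\psi_1=\int_t^{\theta}p\,{\rm d}\tau$ and $\boldsymbol\psi_2=\int_t^{\theta}\partial_\tau\boldsymbol u\,{\rm d}\tau$ and then swaps the order of integration via \eqref{F2} and \eqref{F4}, you integrate the equations forward in time and test with $p$ and $\partial_t\boldsymbol u$, which by Fubini produces the identical double integrals. The resulting energy identity, the sign of the $\Gamma_h$ term from Lemma \ref{ttp}, the exact cancellation on $\Gamma_f$, the absorption of the source, and the final bookkeeping in powers of $T$ all coincide with the paper's proof.
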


\begin{proof}
 Let $ 0< \theta < T$ and define  an  auxiliary function
 \begin{align*}
  \psi_1 (\boldsymbol x, t) =\int_t^{\theta} p (\boldsymbol x, \tau) {\rm d}
\tau, \quad \boldsymbol x  \in \Omega_1, ~~~ 0 \leq t \leq \theta.
 \end{align*}
It is clear to note that
\begin{align}\label{F1}
\psi_1(\boldsymbol x, \theta)=0, \quad \partial_t\psi_1(\boldsymbol x, t)=-p(\boldsymbol x, t).
\end{align}
For any $\phi (\boldsymbol x, t) \in L^2 \left( 0, \xi;~ L^2 (\Omega_1)
\right)$, we have
\begin{align}\label{F2}
 \int_0^{\theta} \phi (\boldsymbol x, t) \bar {\psi_1} (\boldsymbol x, t) {\rm
d} t= \int_0^{\theta} \left( \int_0^t \phi (\boldsymbol x, \tau) {\rm d} \tau
\right) \bar p (\boldsymbol x, t) {\rm d} t.
\end{align}
Indeed, we have from the integration by parts and \eqref{F1} that
\begin{align*}
 \int_0^{\theta}
 &\phi (\boldsymbol x, t) \bar \psi_1 (\boldsymbol x, t) {\rm d} t
=\int_0^{\theta} \left( \phi (\boldsymbol x, t) \int_t^{\theta} \bar p
(\boldsymbol x, \tau) {\rm d} \tau \right) {\rm d} t\\
 &=\int_0^{\theta}\int_t^{\theta} \bar p (\boldsymbol x, \tau) {\rm d} \tau {\rm
d} \left( \int_0^{t} \phi (\boldsymbol x, \varsigma) {\rm d}\varsigma\right)\\
 &=\int_{t}^{\theta} \bar p (\boldsymbol x, \tau) {\rm d} \tau \int_0^{t} \phi
(\boldsymbol x, \varsigma) {\rm d} \varsigma \big|_0^{\theta}
 +\int_0^{\theta} \left( \int_0^{t} \phi (\boldsymbol x,\varsigma) {\rm d}
\varsigma \right) \bar p (\boldsymbol x, t) {\rm d} t\\
 &=\int_0^{\theta} \left( \int_0^t \phi (\boldsymbol x, \tau) {\rm d} \tau
\right) \bar p (\boldsymbol x, t) {\rm d} t.
\end{align*}
Next, we take the test function $q =\psi_1$ in \eqref{v1} and get
\begin{align}\label{tf}
 \int_{\Omega_1} \frac{1}{c^2} \partial_t^2 p\,  \bar \psi_1 {\rm d}
\boldsymbol x= -\int_{\Omega_1} \nabla p \cdot \nabla \bar {\psi_1} {\rm d}
\boldsymbol x +\int_{\Gamma_h} (\mathscr T p)\bar \psi_1 {\rm d} \boldsymbol
r\notag\\
+\int_{\Gamma_f} \rho_1(\boldsymbol n \cdot \partial_t^2 \boldsymbol
u) \bar{\psi}_1{\rm d} \gamma.
\end{align}
It follows from \eqref{F1}  and the initial conditions \eqref{trp3}
that
\begin{align*}
 {\rm Re}\int_0^{\theta} \int_{\Omega_1}
 \frac{1}{c^2} \partial_t^2  p \,\bar \psi_1 {\rm d} \boldsymbol  x  {\rm d} t
 &= {\rm Re} \int_{\Omega_1} \int_0^{\theta} \frac{1}{c^2} \left(\partial_t
\left( \partial_t p \,\bar \psi_1\right) +\partial_t p\, \bar p \right){\rm d}
t{\rm d} \boldsymbol x \\
 &={\rm Re} \int_{\Omega_1} \frac{1}{c^2 } \left(  \partial_t p \,\bar {\psi}_1
\big|_{0}^{\theta} +\frac{1}{2} |p |^2 \big|_{0} ^{\theta}\right) {\rm d}
\boldsymbol  x\\
 &= \frac{1}{ 2} \| \frac{1}{c } p (\cdot, \theta)\|^2_{L^2 (\Omega_1)}.
\end{align*}
It is easy to verify that
\begin{align*}
 {\rm Re} \int_0^{\theta} &\int_{\Gamma_f} \rho_1 ( \boldsymbol n \cdot
\partial_t^2 \boldsymbol u) \bar \psi_1 {\rm d} \gamma {\rm d}t
 ={\rm Re } \int_{\Gamma_f} \int_0^{\theta} \rho_1 \left( \partial_t (
\boldsymbol n \cdot \partial_t \boldsymbol u\, \bar \psi_1) +
 (\boldsymbol n \cdot \partial_t \boldsymbol u) \bar p \right) {\rm d}t {\rm
d}\gamma \\
 &={\rm Re}\int_{\Gamma_f} \rho_1 \left( \boldsymbol n \cdot \partial_t
\boldsymbol u\,\bar {\psi}_1 \big|_0^{\theta} \right){\rm d}\gamma
 +{\rm Re}\int_0^{\theta} \int_{\Gamma_f} \rho_1 (\boldsymbol n \cdot \partial_t
\boldsymbol u) \bar p {\rm d} \gamma {\rm d} t\\
 &={\rm Re}\int_0^{\theta} \int_{\Gamma_f} \rho_1 (\boldsymbol n \cdot
\partial_t \boldsymbol u) \bar p {\rm d} \gamma {\rm d} t.
\end{align*}
Integrating \eqref{tf} from $t=0$ to $t=\theta$ and taking the real parts yield
\begin{align}
\frac{1}{2}&\|\frac{1}{c} p (\cdot, \theta)\|^2_{L^2 (\Omega_1)} +{\rm Re} \int_0^{\theta}
\int_{\Omega_1} \nabla p \cdot \bar {\psi}_1 {\rm d} \boldsymbol  x  {\rm d} t
\nonumber \\
 &= \frac{1}{2} \|\frac{1}{c} p (\cdot, \theta)\|^2_{L^2 (\Omega_1)}
+\frac{1}{2} \int_{\Omega_1} \left| \int_0^{\theta} \nabla p (\cdot, t) {\rm d}
t \right|^2 {\rm d} \boldsymbol x \nonumber\\
 &={\rm Re}\int_0^{\theta} \langle \mathscr T p, \psi_1 \rangle_{\Gamma_h} {\rm
d }t +{\rm Re} \int_0^{\theta} \int_{\Gamma_f} \rho_1 (\boldsymbol n \cdot
\partial_t^2 \boldsymbol u)\bar {\psi}_1 {\rm d} \gamma {\rm d} t \nonumber\\
 &= {\rm Re}\int_0^{\theta} \langle \mathscr T p, \psi_1 \rangle_{\Gamma_h} {\rm
d }t +{\rm Re}\int_0^{\theta} \int_{\Gamma_f} \rho_1 (\boldsymbol n \cdot
\partial_t \boldsymbol u) \bar p {\rm d} \gamma {\rm d} t.\label {en1}
\end{align}

We define another auxiliary function
\[
 \boldsymbol \psi_2  (\boldsymbol x, t) = \int_t^{\theta}
\partial_{\tau}\boldsymbol u (\boldsymbol x, \tau) {\rm d} \tau, \quad
\boldsymbol x \in \Omega_2,~~~0 \leq t \leq \theta < T.
\]
Clearly, we have
\begin{align}\label{F3}
 \boldsymbol \psi_2 (\boldsymbol x, \theta) =0, \quad \partial_t  \boldsymbol
\psi_2 (\boldsymbol x, t) = -\partial_t\boldsymbol u (\boldsymbol x, t).
\end{align}
Using the similar proof as that for \eqref{F2},  for any $\boldsymbol \phi
(\boldsymbol x, t) \in L^2 \big(0, \xi;~ L^2 (\Omega_2)^2 \big)$, we may
show that
\begin{align}\label{F4}
 \int_0^{\theta} \boldsymbol \phi (\boldsymbol x, t) \cdot \bar {\boldsymbol
\psi}_2 (\boldsymbol x, t) {\rm d} t=
 \int_0^{\theta} \left( \int_0^t \boldsymbol \phi (\boldsymbol x, \tau) {\rm d}
\tau\right) \cdot  \partial_t\bar { \boldsymbol u} (\boldsymbol x, t) {\rm d} t.
\end{align}
Taking the test function $\boldsymbol v = \boldsymbol \psi_2$ in \eqref{v2}, we
can get
\begin{align}\label{tf2}
 \int_{\Omega_2} \rho_2  \partial_t^2 (\partial_t \boldsymbol u) \cdot \bar
{\boldsymbol \psi}_2 {\rm d} \boldsymbol  x =
 -\int_{\Omega_2} \big( \mu \nabla (\partial_t \boldsymbol u) : \nabla {\bar
{\boldsymbol \psi}}_2 + (\lambda + \mu) (\nabla \cdot (\partial_t\boldsymbol u))
(\nabla \cdot \bar {\boldsymbol \psi}_2)\notag\\
+ \partial_t  \boldsymbol j \cdot \bar {\boldsymbol \psi}_2 \big) {\rm d}
\boldsymbol x-\int_{\Gamma_f} (\partial_t p) (\boldsymbol n \cdot \bar {
\boldsymbol \psi}_2) {\rm d} \gamma.
\end{align}
It follows from \eqref{F3}  and the initial condition in \eqref {TRP} that
\begin{align*}
 {\rm Re} \int_0^{\theta} \int_{\Omega_2}
 \rho_2 \partial_t^2  ( \partial_t \boldsymbol u) \cdot \bar {\boldsymbol \psi
}_2 {\rm d} \boldsymbol  x  {\rm d} t
 =&{\rm Re} \int_{\Omega_2} \int_0^{\theta} \rho_2 \left( \partial_t
(\partial^2_t \boldsymbol u  \cdot \bar {\boldsymbol \psi}_2)
 +\partial^2_t \boldsymbol u \cdot \partial_t\bar {\boldsymbol u} \right) {\rm
d} t{\rm d} \boldsymbol x \\
 =& {\rm Re} \int_{\Omega_2} \rho_2\left(  (\partial^2_t \boldsymbol u \cdot
\bar {\boldsymbol \psi}_2 ) \big|_{0}^{\theta}+
 \frac{1}{2 } |\partial_t \boldsymbol u|^2  \big|_0^{\theta}\right) {\rm d}
\boldsymbol x \\
 =& \frac{\rho_2}{2} \| \partial_t \boldsymbol u (\cdot,~ \theta)\|^2_{L^2
(\Omega_2)^3},
\end{align*}
and
\begin{align*}
 {\rm Re}\int_0^{\theta} \int_{\Gamma_f} (\partial_t p )( \boldsymbol n \cdot
\bar {\boldsymbol \psi}_2) {\rm d} \gamma {\rm d} t
 &={\rm Re} \int_{\Gamma_f} \int_0^{\theta} \left( \partial_t \left( p
\boldsymbol n \cdot \bar {\boldsymbol \psi}_2 \right)
 + p  (\boldsymbol n \cdot \partial_t \bar {\boldsymbol u})  \right){\rm d}
t{\rm d} \gamma\\
 &={\rm Re}\int_{\Gamma_f} \left( p \boldsymbol n \cdot \bar {\boldsymbol
\psi}_2 \right) \big|_0^{\theta} {\rm d} \gamma +
 {\rm Re}\int_0^{\theta} \int_{\Gamma_f} p (\boldsymbol n \cdot \partial_t \bar
{\boldsymbol u}) {\rm d} \gamma {\rm d} t\\
 &={\rm Re}\int_0^{\theta} \int_{\Gamma_f} p (\boldsymbol n \cdot \partial_t
\bar {\boldsymbol u}) {\rm d} \gamma {\rm d} t.
\end{align*}
Integrating \eqref{tf2} from $t=0$ to $t=\theta$ and taking the real parts yield
\begin{align}
 \frac{\rho_2}{2} &\| \partial_t \boldsymbol u (\cdot, \theta)\|^2_{L^2
(\Omega_2)^3} +{\rm Re}\int_0^{\theta}\int_{\Omega_2}  \big( \mu \nabla
(\partial_t \boldsymbol u (\cdot, t)) : \nabla \bar {\boldsymbol \psi}_2 (\cdot,
t)\notag\\
& +(\lambda+\mu) \left(\nabla \cdot (\partial_t \boldsymbol u (\cdot, t)))
(\nabla \cdot \bar {\boldsymbol \psi}_2 (\cdot, t) \right)\big) {\rm d}
\boldsymbol  x{\rm d }t \nonumber\\
 =&\frac{\rho_2}{2}\| \partial_t\boldsymbol u (\cdot, \theta)\|^2_{L^2
(\Omega_2)^2}
 +\frac{1}{2} \int_{\Omega_2} \bigg( \mu \left| \int_0^{\theta} \nabla
(\partial_t \boldsymbol u (\cdot, t)) {\rm d} t \right|^2_{F}\notag\\
&+ (\lambda+\mu) \left| \int_0^{\theta} \nabla \cdot (\partial_t  \boldsymbol u
(\cdot, t)) {\rm d} t \right|^2\bigg) {\rm d} \boldsymbol  x \nonumber\\
 =& - {\rm Re}\int_0^{\theta} \int_{\Omega_2} \partial_t \boldsymbol j \cdot
\bar {\boldsymbol \psi}_2 {\rm d} \boldsymbol x  {\rm d}t
 -{\rm Re}\int_0^{\theta} \int_{\Gamma_f} p (\boldsymbol n \cdot \partial_t \bar
{\boldsymbol u}) {\rm d} \gamma {\rm d} t,  \label{en2}
\end{align}
where
\[
 \left| \int_0^{\theta} \nabla ( \partial_t \boldsymbol u (\cdot, t)) {\rm d}
t\right|^2_{F}: = \int_0^{\theta} \nabla (\partial_t \boldsymbol u (\cdot, t))
{\rm d }t :
\int_0^{\theta} \nabla (\partial_t \bar {\boldsymbol u} (\cdot, t)) {\rm d} t.
\]
Multiplying \eqref{en2} by $\rho_1$ and adding it to \eqref{en1} give
\begin{align}
 \frac{1}{2}
 &\|\frac{1}{c} p (\cdot, \theta)\|^2_{L^2 (\Omega_1)} +\frac{1}{2}
\int_{\Omega_1} \left| \int_0^{\theta} \nabla p (\cdot, t) {\rm d} t \right|^2
{\rm d} \boldsymbol  x
 + \frac{\rho_1 \rho_2}{2}\| \partial_t\boldsymbol u (\cdot, \theta)\|^2_{L^2
(\Omega_2)^3}\nonumber\\
 &+\frac{\rho_1}{2} \int_{\Omega_2} \left( \mu \left| \int_0^{\theta} \nabla
(\partial_t \boldsymbol u (\cdot, t)) {\rm d} t \right|^2_{F} +
 (\lambda+\mu) \left| \int_0^{\theta} \nabla \cdot (\partial_t  \boldsymbol u
(\cdot, t)) {\rm d} t \right|^2\right) {\rm d} \boldsymbol  x \nonumber\\
 =&{\rm Re}\int_0^{\theta} \langle \mathscr T p, \psi_1 \rangle_{\Gamma_h} {\rm
d }t +{\rm Re}\int_0^{\theta} \int_{\Gamma_f} \rho_1 (\boldsymbol n \cdot
\partial_t \boldsymbol u)\bar p {\rm d} \gamma {\rm d} t\nonumber\\
 &- {\rm Re}\int_0^{\theta} \int_{\Omega_2} \rho_1(\partial_t \boldsymbol j
\cdot \bar {\boldsymbol \psi}_2) {\rm d} \boldsymbol x  {\rm d}t
 -{\rm Re}\int_0^{\theta} \int_{\Gamma_f} \rho_1 p (\boldsymbol n \cdot
\partial_t \bar {\boldsymbol u}) {\rm d} \gamma {\rm d} t \nonumber\\
 =&{\rm Re}\int_0^{\theta} \langle \mathscr T p, \psi_1 \rangle_{\Gamma_h} {\rm
d }t - {\rm Re}\int_0^{\theta} \int_{\Omega_2} \rho_1(\partial_t \boldsymbol j
\cdot\bar {\boldsymbol \psi}_2) {\rm d} \boldsymbol x  {\rm d}t.\label{int}
\end{align}

In what follows, we estimate the two terms on the right-hand side of \eqref{int}
separately. Using Lemma \ref{ttp} and \eqref{F2}, we obtain
\begin{align}
 {\rm Re} \int_0^{\theta} \langle \mathscr T p, \psi_1 \rangle_{\Gamma_h} {\rm d
}t &= {\rm Re}\int_0^{\theta}\int_{\Gamma_h} (\mathscr T p) \bar {\psi}_1 {\rm
d}\boldsymbol r {\rm d} t \nonumber\\
 &={\rm Re} \int_{\Gamma_h}\int_0^{\theta} \left( \int_0^t \mathscr T p (\cdot,
\tau) {\rm d} \tau \right) \bar p (\cdot, t) {\rm d} t{\rm d}\boldsymbol r
\leq 0.\label{p1}
\end{align}
For $0 \leq t \leq \theta \leq T$, we have from \eqref{F4} that
\begin{align}
 {\rm Re}\int_0^{\theta} \int_{\Omega_2}& -\rho_1(\partial_t \boldsymbol j \cdot
\bar {\boldsymbol \psi}_2) {\rm d} \boldsymbol x  {\rm d}t
 = {\rho_1}{\rm Re}\int_{\Omega_2} \int_0^{\theta} \left( \int_0^t- \partial_t
\boldsymbol j (\cdot, \tau) {\rm d} \tau \right) \cdot
 \partial_t \bar {\boldsymbol u}  (\cdot, t) {\rm d}t {\rm d} \boldsymbol x
\nonumber\\
 &= \rho_1 {\rm Re}\int_0^{\theta} \int_0^t \int_{\Omega_2} - \partial_t
\boldsymbol j (\cdot, \tau) \cdot \partial_t \bar {\boldsymbol u} (\cdot, t)
{\rm d} \boldsymbol x {\rm d}\tau {\rm d} t \nonumber\\
 &\leq \rho_1 \int_0^{\theta} \left( \int_0^{t} \|\partial_t  \boldsymbol j
(\cdot, \tau)\|_{L^2 (\Omega_2)^3} {\rm d }\tau\right)
 \| \partial_t  \boldsymbol u (\cdot, t)\|_{L^2 (\Omega_2)^2} {\rm d} t
\nonumber\\
 &\leq \rho_1 \int_0^{\theta} \left( \int_0^{\theta} \|\partial_t  \boldsymbol j
(\cdot, t)\|_{L^2 (\Omega_2)^3} {\rm d }t\right)
 \| \partial_t  \boldsymbol u (\cdot, t)\|_{L^2 (\Omega_2)^3} {\rm d} t
\nonumber\\
 &\leq \rho_1 \left( \int_0^{\theta} \|\partial_t  \boldsymbol j (\cdot,
t)\|_{L^2(\Omega_2)^3} {\rm d }t\right)
 \left( \int_0^{\theta} \|\partial_t  \boldsymbol u (\cdot, t)\|_{L^2
(\Omega_2)^3} {\rm d }t\right). \label{p2}
\end{align}
Substituting \eqref{p1} and \eqref{p2} into \eqref{int}, we have for any $\theta\in [0,T]$ that
\begin{align}
 \frac{1}{2}&\|\frac{1}{c} p (\cdot, \theta)\|^2_{L^2 (\Omega_1)}
 + \frac{\rho_1 \rho_2}{2}\| \partial_t\boldsymbol u (\cdot, \theta)\|^2_{L^2
(\Omega_2)^3} \nonumber\\
 \leq &\frac{1}{2}
 \|\frac{1}{c} p (\cdot, \theta)\|^2_{L^2 (\Omega_1)} +\frac{1}{2}
\int_{\Omega_1} \left| \int_0^{\theta} \nabla p (\cdot, t) {\rm d} t \right|^2
{\rm d} \boldsymbol  x
 + \frac{\rho_1 \rho_2}{2}\| \partial_t\boldsymbol u (\cdot, \theta)\|^2_{L^2
(\Omega_2)^3}\nonumber\\
 &+\frac{\rho_1}{2} \int_{\Omega_2} \left( \mu \left| \int_0^{\theta} \nabla
(\partial_t \boldsymbol u (\cdot, t)) {\rm d} t \right|^2_{F} +
 (\lambda+\mu) \left| \int_0^{\theta} \nabla \cdot (\partial_t  \boldsymbol u
(\cdot, t)) {\rm d} t \right|^2\right) {\rm d} \boldsymbol  x \nonumber\\
  \leq & \rho_1\left( \int_0^{\theta} \|\partial_t  \boldsymbol j (\cdot,
t)\|_{L^2 (\Omega_2)^3} {\rm d }t\right)
 \left( \int_0^{\theta} \|\partial_t  \boldsymbol u (\cdot, t)\|_{L^2
(\Omega_2)^2} {\rm d }t\right).\label{p3}
\end{align}

Taking the $L^{\infty}$ norm with respect to $\theta$ on both sides of
\eqref{p3} yields
\begin{align*}
 \|p\|^2_{L^{\infty}\left(0, T;~ L^2(\Omega_1) \right)}& +\|\partial_t
\boldsymbol u\|^2_{L^{\infty}\left(0, T;~ L^2(\Omega_2)^3 \right)}\\
 &\lesssim T \left( \|\partial_t \boldsymbol j\|_{L^1(0, T; ~L^2 (\Omega_2)^3)}
\|\partial_t \boldsymbol u\|_{L^{\infty}\left(0, T;~ L^2(\Omega_2)^3
\right)}\right)
\end{align*}
Applying the Young inequality to the above inequality, we get
\begin{align}\label{yi}
  \|p\|^2_{L^{\infty}(0, T;~ L^2(\Omega_1))} +\|\partial_t \boldsymbol
u\|^2_{L^{\infty}(0, T;~ L^2(\Omega_2)^3 )}\lesssim T^2  \|\partial_t
\boldsymbol j\|^2_{L^1(0, T; ~L^2 (\Omega_2)^3)}.
\end{align}
It follows from the Cauchy--Schwarz inequality that
\begin{align*}
 \|p\|_{L^{\infty} (0, T; L^2 (\Omega_1))} &\leq \|p\|_{L^{\infty}\left(0, T;~
L^2(\Omega_1) \right)} +\|\partial_t \boldsymbol u\|_{L^{\infty}\left(0, T;~
L^2(\Omega_2)^3 \right)}\\
 &\lesssim T  \|\partial_t \boldsymbol j\|_{L^1(0, T; ~L^2 (\Omega_2)^3)},
\end{align*}
which gives the estimate \eqref{ess1}.

For the elastic displacement $\boldsymbol u$,  using the epsilon inequality
gives
\begin{align*}
 \| \boldsymbol u\|^2_{L^{\infty} (0, T; L^2 (\Omega_2)^3)}& =\int_0^{t}
\partial_{\tau} \| \boldsymbol u (\cdot, \tau)\|^2_{L^{\infty} (0, T; L^2
(\Omega_2)^3)} {\rm d \tau}\\
& \leq \epsilon T \| \boldsymbol u\|^2_{L^{\infty} (0, T; L^2 (\Omega_2)^3)}+
\frac{T}{\epsilon}
  \|\partial_t \boldsymbol u\|^2_{L^{\infty}\left(0, T;~ L^2(\Omega_2)^3
\right)}.
\end{align*}
Choosing $\epsilon =\frac{1}{2T}$, we have from  \eqref{yi} that
\begin{align*}
 \| \boldsymbol u\|^2_{L^{\infty} (0, T; L^2 (\Omega_2)^3)}
  &\lesssim T^2  \|\partial_t \boldsymbol u\|^2_{L^{\infty}\left(0, T;~
L^2(\Omega_2)^3 \right)}\\
 &\lesssim T^2 \left( \|p\|^2_{L^{\infty}\left(0, T;~ L^2(\Omega_1) \right)}
+\|\partial_t \boldsymbol u\|^2_{L^{\infty}\left(0, T;~ L^2(\Omega_2)^3 \right)}
\right)\\
 & \lesssim T^4  \|\partial_t \boldsymbol j\|^2_{L^1(0, T; ~L^2 (\Omega_2)^3)},
\end{align*}
which implies the estimate \eqref{ess2}.

Integrating \eqref{p3} with respect to $\theta$ from $0$ to $T$ and using the
Cauchy--Schwarz inequality, we obtain
\begin{align*}
 \|p\|^2_{L^{2}\left(0, T;~ L^2(\Omega_1) \right)}& +\|\partial_t \boldsymbol
u\|^2_{L^{2}\left(0, T;~ L^2(\Omega_2)^3 \right)}\\
 &\lesssim T^{3/2} \left( \|\partial_t \boldsymbol j\|_{L^1(0, T; ~L^2
(\Omega_2)^3)} \|\partial_t \boldsymbol u\|_{L^{2}\left(0, T;~ L^2(\Omega_2)^3
\right)}\right).
\end{align*}
Using Young's inequality again to the above equation yields
 \begin{align}\label{yi2}
  \|p\|^2_{L^{2}\left(0, T;~ L^2(\Omega_1) \right)} +\|\partial_t \boldsymbol
u\|^2_{L^{2}\left(0, T;~ L^2(\Omega_2)^3 \right)}
  \lesssim T^3  \|\partial_t \boldsymbol j\|^2_{L^1(0, T; ~L^2 (\Omega_2)^3)}.
 \end{align}
It follows from the Cauchy--Schwarz inequality that
\begin{align*}
 \|p\|_{L^{2}\left(0, T;~ L^2(\Omega_1) \right)}
 &\leq  \|p\|_{L^{2}\left(0, T;~ L^2(\Omega_1) \right)} +\|\partial_t
\boldsymbol u\|_{L^{2}\left(0, T;~ L^2(\Omega_2)^3 \right)} \\
&\lesssim T^{3/2} \|\partial_t \boldsymbol j\|_{L^1(0, T; ~L^2 (\Omega_2)^3)},
\end{align*}
which shows the estimate \eqref{ess}.

Taking $\epsilon =\frac{1}{2T}$ and applying the epsilon inequality, we have
\begin{align*}
 \| \boldsymbol u\|^2_{L^2 (0, T; L^2 (\Omega_2)^3)}& =\int_0^{t}
\partial_{\tau} \| \boldsymbol u (\cdot, \tau)\|^2_{L^2(0, T; L^2
(\Omega_2)^3)} {\rm d \tau}\\
& \leq \frac{1}{2} \| \boldsymbol u\|^2_{L^2 (0, T; L^2 (\Omega_2)^3)}+ 2T^2
  \|\partial_t \boldsymbol u\|^2_{L^2\left(0, T;~ L^2(\Omega_2)^3
\right)},
\end{align*}
It follows from the above inequality and \eqref{yi2} that
\begin{align*}
 \| \boldsymbol u\|^2_{L^2 (0, T; L^2 (\Omega_2)^3)}
  \lesssim T^2  \|\partial_t \boldsymbol u\|^2_{L^2\left(0, T;~
L^2(\Omega_2)^3 \right)}\lesssim T^5
\|\partial_t \boldsymbol j\|^2_{L^1(0, T; ~L^2 (\Omega_2)^3)},
\end{align*}
which implies the estimate \eqref{ess3}.
\end{proof}

\section{Conclusion}\label{cl}

In this paper we have studied the time-domain acoustic-elastic interaction
problem in an unbounded structure in the three-dimensional space. The problem
models the wave propagation in a two-layered medium consisting of the air/fluid
and the solid due to an active source located in the solid. We reduce the
scattering problem into an initial-boundary value problem by using the exact
TBC. We establish the well-posedness and the stability for the variational
problem in the $s$-domain. In the time-domain, we show that the reduced
problem has a unique weak solution by using the energy method. The main
ingredients of the proofs are the Laplace transform, the Lax--Milgram lemma,
and the Parseval identity. We also obtain a priori estimates with explicit time
dependence for the quantities of acoustic wave pressure and elastic wave
displacement by taking special test functions to the time-domain variational
problem.

\end{document}